\documentclass[11pt]{article} 

\usepackage[utf8]{inputenc} 

\usepackage{cancel}

\usepackage{geometry} 
\usepackage{graphicx} 
\usepackage[english]{babel}

\usepackage{booktabs} 
\usepackage{array} 
\usepackage{paralist} 
\usepackage{verbatim} 
\usepackage{subfig} 
\usepackage{amssymb}
\usepackage{bbm}

\usepackage{amsmath}
\usepackage{amsthm}
\usepackage{makecell}

\usepackage{epigraph}

\usepackage{hyperref}
\hypersetup{
    colorlinks=true,
    citecolor=blue
}

\usepackage{pdflscape}
\usepackage{natbib}
\usepackage{algorithm2e}

\usepackage{wrapfig}
\usepackage{subfig}
\usepackage[capitalize,noabbrev]{cleveref}
\usepackage{authblk}
\usepackage{changepage}

\usepackage{tikz-cd}
\usepackage{xcolor}

\renewcommand{\le}{\leqslant}

\renewcommand{\ge}{\geqslant}

\newcommand{\id}{\mathbbmss 1}

\newcommand {\matl}{\left[ \begin{matrix}}
\newcommand {\matr}{\end{matrix}\right]}
\newcommand {\Exp}{ \mathbb E }

\renewcommand {\Pr}{ \mathbb P }

\newcommand{\blamb}{\boldsymbol{\lambda}}
\newcommand{\bnu}{\boldsymbol{\nu}}
\newcommand{\bbeta}{\boldsymbol{\beta}}
\newcommand{\bgam}{\boldsymbol{\gamma}}
\newcommand{\lKT}{\lambda^{\mathsf{KT}}}
\newcommand{\lkelly}{\lambda^{\mathsf{Kelly}}}
\newcommand{\lgr}{\lambda^{\mathsf{GRAPA}}}
\newcommand{\lKL}{\lambda^{\mathsf{KL}}}
\newcommand{\lagr}{\lambda^{\mathsf{aGRAPA}}}

\newcommand {\Var}{\mathbf{Var}}

\newcommand{\kl}{D_{\operatorname{KL}}}
\newcommand{\klinf}{\operatorname{KL}_{\inf}}

\renewcommand{\d}{\mathrm{d}}
\newcommand{\iid}{\overset{\mathrm{iid}}{\sim}}

\newcommand{\cD}{\mathcal{D}}

\newcommand{\CI}{\operatorname{CI}}

\DeclareMathAlphabet{\mathbbmsl}{U}{bbm}{m}{sl}

\DeclareMathOperator*{\argmax}{arg\,max}

\newtheorem{theorem}{Theorem}[section]
\newtheorem{definition}[theorem]{Definition}

\newtheorem{fact}{Fact}
\newtheorem{proposition}[theorem]{Proposition}

\newtheorem{corollary}[theorem]{Corollary}
\newtheorem{lemma}[theorem]{Lemma}
\theoremstyle{definition}\newtheorem{remark}[theorem]{Remark}
\newtheorem{example}[theorem]{Example}

\Crefname{fact}{Fact}{Facts}
\Crefname{assumption}{Assumption}{Assumptions}

\title{Almost sure null bankruptcy of 
testing-by-betting strategies\footnote{COLT 2026}}

\author[1]{Hongjian Wang}
\author[2]{Shubhada Agrawal}
\author[3]{Aaditya Ramdas}
\affil[1]{Carnegie Mellon University \href{mailto:hjnwang@cmu.edu}{\texttt{hjnwang@cmu.edu}}}
\affil[2]{Indian Institute of Science  \href{mailto:shubhada@iisc.ac.in}{\texttt{shubhada@iisc.ac.in}}}
\affil[3]{Carnegie Mellon University \href{mailto:aramdas@cmu.edu}{\texttt{aramdas@cmu.edu}}}

\date{\today}

\begin{document}

\maketitle

\begin{abstract}
    The bounded mean betting procedure serves as a crucial interface between the domains of (1) sequential, anytime-valid statistical inference, and (2) online learning and portfolio selection algorithms. While recent work in both domains has established the exponential wealth growth of numerous betting strategies under any alternative distribution, the tightness of the inverted confidence sets, and the pathwise minimax regret bounds, little has been studied regarding the asymptotics of these strategies under the null hypothesis. Under the null, a strategy 
induces a wealth martingale converging to some random variable that can be zero (bankrupt) or non-zero (non-bankrupt, e.g.\ when it eventually stops betting). 
In this paper, we show the conceptually intuitive but technically nontrivial fact that these strategies (universal portfolio, Krichevsky-Trofimov, GRAPA, hedging, etc.)\ all go bankrupt with probability one, under any non-degenerate null distribution. Part of our analysis is based on the subtle almost sure divergence of various sums of $\sum_n O_p(n^{-1})$ type, 
a result of independent interest.
We also demonstrate the necessity of null bankruptcy by showing that non-bankrupt strategies are all improvable in some sense.
Our results significantly deepen our understanding of these betting strategies as they qualify their behavior on ``almost all paths'', whereas previous results are usually on ``all paths'' (e.g.\ regret bounds) or ``most paths'' (e.g.\ concentration inequalities and confidence sets).
  
\end{abstract}


\section{Introduction}\label{sec:intro}

We consider the problem of testing the mean of i.i.d.\ random variables taking values in $[0,1]$ via betting, studied by various authors including \cite{shafer2005probability,shafer2019game,shafer2021testing,waudby2024estimating,orabona2023tight,voracek2025starbets}. Let $(X_n) \iid P$ where $P$ is some distribution on $[0,1]$ with unknown mean $\mu(P) = \Exp X_1 \in [0,1]$, and let the null hypothesis be $H_0: \mu(P) = m$. To set the stage for our upcoming discourse, we recall that the \emph{betting wealth process} that sequentially tests $H_0$ with
a \emph{fixed} bet fraction $\lambda \in [-\frac{1}{1-m}, \frac{1}{m}]$ is
\begin{equation}\label{eqn:bet-fixed}
    W_n^\lambda := \prod_{k=1}^n (1 + \lambda(X_k - m) ).
\end{equation}
$(W_n^\lambda)$ is interpreted as follows. The statistician starts with unit wealth, and bets round by round on the outcomes of $X_1,X_2,\dots$, where $x \in \mathbb R$ units of bet on $X_k$ placed before the revelation of $X_k$ leads to $x\cdot (X_k - m)$ units of profit (if positive) or loss (if negative). Each round, the statistician bets a fixed fraction $\lambda$ of the current wealth.
$W_n^\lambda$ is therefore the statistician's wealth after $n$ such rounds. Clearly, $(W^\lambda_n)$ forms
a nonnegative martingale under any distribution $P$ on $[0,1]$ that satisfies the null $H_0: \mu(P)=m$ (``null distribution'' henceforth). Therefore, a key doctrine of game-theoretic statistics \citep{ramdas2023game} states that $(W^\lambda_n)$ quantifies the accumulation of evidence carried by the observations $(X_n)$ against the null hypothesis $H_0: \mu(P) = m$ in favor of the alternative hypothesis $H_1: \mu(P) \neq m$.


The fixed-fraction betting strategy \eqref{eqn:bet-fixed}, however, fails to be universally powerful in the sense that there always exists some alternative distribution (i.e.\ $P$ such that $\mu(P) \neq m$) under which $W_n^\lambda \to 0$ almost surely. To see that,  when $\lambda > 0$, under any non-degenerate $P$ such that $\mu(P) < m$, $\log W_n^\lambda \le \sum_{k=1}^n \lambda(X_k - m) \to -\infty$ and consequently $W_n^\lambda \to 0$. To overcome this, three (overlapping) classes of betting strategies derived from \eqref{eqn:bet-fixed} are 
commonly employed.


\paragraph{Predictable plug-in.} Let $\blamb = (\lambda_n)_{n \ge 1}$ be a $[-\frac{1}{1-m}, \frac{1}{m}]$-valued stochastic process that is predictable with respect to the filtration $\sigma(X_1,\dots, X_n)$. The wealth process corresponding to betting $\lambda_k$ fraction of wealth on the upcoming $X_k$ reads
\begin{equation}\label{eqn:predictable}
    W_n^{\blamb} := \prod_{k=1}^n (1 + \lambda_k(X_k - m) ).
\end{equation}
$(W_n^{\blamb})$ remains a nonnegative martingale under $H_0$. Usually, the value of $\lambda_n$ is picked based on $X_1,\dots, X_{n-1}$ to match the sign of the ``reality-house discrepancy'' $\mu(P) - m$. The simplest idea is to use the empirical mean of $X_1-m, \dots, X_{n-1}-m$, leading to the Krichevsky-Trofimov-type (KT) bettor \citep{krichevsky1981performance,orabona2016coin}:
\begin{equation}\label{eqn:kt-lambda}
    \lKT_n := \frac{1/2 + \sum_{k=1}^{n-1}(X_k - m)}{Cn},
\end{equation}
where $C \ge {m(1-m)}$ is an appropriate constant. The KT bettor with $C={m(1-m)}$ is the ``standard'' KT bettor in the literature, where $\lKT_n$ estimates $\frac{\mu(P) - m}{m(1-m)}$, a quantity that equals the \emph{Kelly-optimal} bet fraction maximizing the expected log-payoff per round
\begin{equation}\label{eqn:kelly-optimal}
   \lkelly = \argmax_{\lambda\in [-\frac{1}{1-m}, \frac{1}{m}]} \bigg\{\Exp_P(\log(1+\lambda(X_1 - m)))\bigg\}
\end{equation}
when the data-generating distribution $P$ is the Bernoulli coin-toss $\operatorname{bernoulli}(\mu(P))$.
In general, for non-Bernoulli $P$, the Kelly-optimal bet fraction \eqref{eqn:kelly-optimal} needs to be estimated instead by its natural M-estimator:
\begin{equation}\label{eqn:grapa-lambda}
    \lgr_n := \argmax_{ \lambda \in [-\frac{1}{1-m}, \frac{1}{m}]} \left\{ \frac{1}{n-1}\sum_{k=1}^{n-1} \log(1+\lambda(X_k - m)) \right\} = \argmax_{ \lambda \in [-\frac{1}{1-m}, \frac{1}{m}]} W_{n-1}^\lambda. 
\end{equation}
That is, $\lgr_n$ is the hindsight optimal fixed bet fraction after $n-1$ rounds (a.k.a.\ ``follow-the-leader''), and is therefore named the ``growth rate adaptive to the particular alternative'' (GRAPA) bettor by \cite{waudby2024estimating}. GRAPA requires an optimization procedure that may be computationally undesirable, therefore \cite{waudby2024estimating} also propose the approximate GRAPA (aGRAPA) bettor, which we discuss later.

The method of predictable plugin betting \eqref{eqn:predictable} is the widest class of testing by betting strategies, among which we name KT and GRAPA here and include the additional aGRAPA in \cref{sec:bankrupt-pred}. The other two classes of betting strategies are both subclasses of predictable plugin betting. In fact, \citet[Proposition 2]{waudby2024estimating} prove that \emph{all} nonnegative martingales under $H_0:\mu(P)=m$ are representable by \eqref{eqn:predictable} for some predictable sequence $(\lambda_n)$. Nonetheless, we introduce the other two classes of betting strategies, since it is often easier to analyze the property of a betting strategy if it falls in to one of these two subclasses.

\paragraph{Mixture.} Let $\pi$ be a probability measure on $[-\frac{1}{1-m},\frac{1}{m}]$. The wealth process corresponding to the portfolio consisting of the (possibly continuous) collection of different fixed-fraction bets $\{(W_n^\lambda) : \lambda \in \operatorname{supp}(\pi) \}$, weighted by $\lambda \sim \pi$, is
\begin{equation}\label{eqn:mixture-wealth}
    W_n^\pi := \int W_n^\lambda \, \pi(\d \lambda) = \int \left\{\prod_{k=1}^n (1 + \lambda(X_k - \mu))  \right\} \pi(\d \lambda).
\end{equation}
It is easy to verify that the mixture wealth \eqref{eqn:mixture-wealth} can be represented as an instance of predictable plug-in \eqref{eqn:predictable} with the predictable fraction sequence $\lambda_k^\pi = \frac{\int \lambda W_{k-1}^\lambda  \pi(\d \lambda) }{ \int W_{k-1}^\lambda  \pi(\d \lambda) }$, and is also a nonnegative martingale under $H_0$.
Some choices of mixture measure $\pi$ that are continuously supported on $[-1,1]$ trace back to \cite{robbins1970statistical} in the context of subGaussian mean testing, as well as to \cite{cover1991universal,cover2002universal} in the context of portfolio selection (``universal portfolios''). Via these mixture measures, \cite{orabona2023tight} establish a link from regret bounds to confidence sequences for $\mu(P)$; and
we shall soon see that the specific expressions of these measures do not matter for our current paper.

\paragraph{Predictable hedging.} Finally, there is a class of strategies that resemble in form both predictable plug-in and mixture strategies. Let $\blamb = (\lambda_n)$ be a $[0, \min(\frac{1}{1-m}, \frac{1}{m} ) )$-valued predictable process. The hedged betting wealth process based on $\blamb$ is
\begin{equation}\label{eqn:hedged}
    W_n^{\pm\blamb} := \frac{1}{2} \prod_{k=1}^n (1 + \lambda_k(X_k - m) ) + \frac{1}{2} \prod_{k=1}^n (1 - \lambda_k(X_k - m) ),
\end{equation}
and is also a nonnegative martingale under $H_0$.
That is, one takes a two-point mixture among the strategies $W_n^{\blamb}$ and $W_n^{-\blamb}$. The fraction sequence $\blamb$ here is often taken at the decreasing rate $\lambda_n \asymp n^{-1/2}$ or $(n\log n)^{-1/2}$, choices demonstrated by \cite{waudby2024estimating,shekhar2023near} to enable optimal $(1-\alpha)$-confidence sets for $\mu(P)$. A specific choice recommended by \citet[Equation (26)]{waudby2024estimating} reads
\begin{equation}\label{eqn:hedging-rate}
    \lambda_n^{\mathsf{PrH}} = \left( -\frac{C}{1-m} \right) \vee \sqrt{\frac{2\log(2/\alpha)}{\widehat \sigma_{n-1}^2 n \log(n+1)}}
    \wedge \frac{C}{m},
\end{equation}
where $\widehat \sigma_{n-1}^2$ is an appropriate consistent variance estimator from $X_1,\dots, X_{n-1}$, and $C\in(0,1)$ some clipping constant.

\vspace{1em}

Many variations of these betting strategies exist.
To quote from \cite{waudby2024estimating}, 
``each of these betting strategies have their respective benefits, whether computational, conceptual, or
statistical''. We refer the reader to the original work by \cite{waudby2024estimating} and \cite{orabona2023tight} as well as the papers cited therein for more discussions on betting strategies for the bounded mean problem.

A crucial shared property that separates all these involved strategies from the na\"ive fixed-fraction strategy \eqref{eqn:bet-fixed} is that they are all \emph{universally power-one}. That is, for \emph{any} alternative distribution $P$ (i.e.\ $P$ on $[0,1]$ with mean $\mu(P)\neq m$), the wealth process $(W_n)$ of these strategies always grows to infinitely almost surely: $P(W_n \to \infty)=1$. 
Very often, the wealth growth happens almost surely at an exponential rate: $P(\liminf n^{-1}\log W_n > 0) = 1$. Indeed, it is not hard to see that the KT bettor with $C = 2$, the GRAPA bettor, and the mixture bettor with continuous $\pi$ on $[-1,1]$ are all exponentially powerful in this sense.

Additionally, as is frequently alluded to in our introduction to them, many of these strategies are adapted from the online learning and portfolio selection literature where there is neither hypothesis testing framework nor probability distribution assumed on the observation sequence $(X_n)$; they enjoy sharp pathwise regret bounds $R_n := \sup_{\lambda} (\log W_n^\lambda) - \log W_n$ at the rate of $R_n \lesssim \log n$ on \emph{every} sample path $(X_n) \in [0,1]^\infty$. Many, on the other hand, lead to $(1-\alpha)$-confidence sequences for $\mu(P)$ by inversion $\CI_n = \{ m : W_n(m) \le 1/\alpha \}$ (where $W_n(m)$ is the wealth testing $H_0: \mu(P) = m$) of optimal size.

\textbf{The key contribution} of our current paper is that, these well-designed betting strategies which oftentimes enjoy exponential almost sure growth rates under alternative distributions, or logarithmic regret bounds in the pathwise sense, or minimax optimal confidence sequences under inversion, would \emph{all fall into bankruptcy \textbf{with probability one} under any non-degenerate null distribution}.
That is, as long as $\mu(P) = m$ and $P$ is not a point mass at $m$,
\begin{equation}\label{eqn:bankruptcy}
    P( W_n \to 0 ) = 1
\end{equation}
for all these wealth processes $(W_n)$. Further, we demonstrate that the null bankruptcy phenomenon \eqref{eqn:bankruptcy} does \emph{not} happen on some suboptimal variants of these betting strategies, which suggests that null bankruptcy is a fundamental property of ``good'' strategies.

Our results that $P( W_n \to 0 ) = 1$ under any non-degenerate null $P$ complete the picture regarding the behavior of these betting strategies in the asymptotic regime, complementary to the fact that $P(W_n \to \infty) = 1$ under any alternative $P$  mentioned earlier. They also deepen our prior understanding of these wealth processes under the null: we previously were only aware of the fact that $(W_n)$ is a nonnegative martingale under any null $P$, and therefore (1) it must converge a.s.\ to \emph{some random variable}, and (2) it satisfies the nonasymptotic Ville's inequality $P( \sup W_n < x) \ge 1- x^{-1}$, i.e.\ it is unlikely to accumulate wealth more than $x$, \emph{for $x$ (much) larger than 1}.

The null bankruptcy of the simple and less powerful fixed-fraction betting strategy \eqref{eqn:bet-fixed}, we note, is well-understood and known as the ``gambler's ruin'' in elementary probability textbooks. The null bankruptcy of these involved, power-one, pathwise minimal-regret strategies that we prove in this paper, in contrary, requires applying and devising of some insightful results in asymptotic probability, and oftentimes happens much more subtly. At the end of this paper, we also 
show that a subclass of strategies that do not go bankrupt under the null are all ``improvable'' in some sense. 

\paragraph{Notation.} We shall frequently employ the asymptotic notations in its both pathwise and in-probability usages. Let $(X_n)$ be a sequence of random variables and $(a_n)$ a sequence of nonrandom positive numbers. We say $X_n = O_{a.s.}(a_n)$ etc.\ if the pathwise event $X_n=O(a_n)$ happens with probability 1.
We recall $X_n = O_p(a_n)$ if for any $\varepsilon > 0$, there exists $M > 0$ such that
\begin{equation}
  \Pr(|X_n| \le M a_n) \ge 1- \varepsilon \quad \text{for all but finitely many }n.
\end{equation}
Similarly, we say $X_n = \Omega_p(a_n)$ if for any $\varepsilon > 0$ there exists $\delta > 0$ such that
\begin{equation}
  \Pr(|X_n| \ge \delta a_n) \ge 1- \varepsilon \quad \text{for all but finitely many }n.
\end{equation}
It is well-known that if $(X_n)$ converges weakly to some distribution, then $X_n = O_p(1)$; and if to some distribution that does not charge 0 with positive probability, $X_n = \Omega_p(1)$. The sample mean of i.i.d.\ random variables with positive and finite variance is both $O_p(n^{-1/2})$ and $\Omega_p(n^{-1/2})$, but neither $O_{a.s.}(n^{-1/2})$ nor $\Omega_{a.s.}(n^{-1/2})$.

We use the symbols $\Pr(\cdot)$ and $\Exp(\cdot)$ to denote probability and expected value in a generic context; we use $P(\cdot)$ and $\Exp_P(\cdot)$ for probability and expected value when specifically $(X_n)\iid P$.

\paragraph{Additional related work.} A few other categories of recent research are surveyed in \cref{sec:related}, including (1) subGaussian and sub-$\psi$ test processes; (2) the contrasts among ``all'', ``$P$-almost all'', and ``$P$-most'' paths, and between ``always'' and ``eventually'' valid statements; (3) past papers that occasionally mention or hint at null bankruptcy.

\section{Bankruptcy of predictable plug-in and hedging}\label{sec:bankruptcy-predictable}
\subsection{Necessary and sufficient condition for null bankruptcy}

Our first theoretical contribution is that we identify the necessary and sufficient condition for the null bankruptcy \emph{event} (and consequently for this event to happen almost surely) of any predictable plugin betting strategy. As we cited earlier from \cite{waudby2024estimating}, all nonnegative martingales are some predictable plugin betting strategy, this completely characterizes the null bankruptcy behavior of all these processes.
The following result states that,
under any non-degenerate null $P$, bankruptcy happens exactly on the sample paths where the sum of squares of the bet fractions $\sum \lambda_n^2$ diverges, and sample paths where an ``all-in bet'' loses all current wealth, up to a $P$-negligible set.

\begin{theorem}[Sum-of-squares criterion]\label{thm:sos-crit} Let $P$ be a non-degenerate distribution on $[0,1]$ with mean $m$ and $(X_n)\iid P$.
    Let $\blamb = (\lambda_n)$ be a predictable process taking values in $[-\frac{1}{1-m}, \frac{1}{m}]$.
    Then, the $\blamb$-betting wealth process
    \begin{equation}
        W_n^{\blamb}=\prod_{k=1}^n (1 + \lambda_k(X_k - m))
    \end{equation}
    converges almost surely to a random variable $W_\infty$ satisfying
    \begin{equation}
       \{W_\infty = 0 \} = \left\{ \sum_{n=1}^\infty \lambda_n^2 = \infty \right\}  \cup \bigcup_{n=1}^\infty \{ \lambda_n(X_n - m) = -1 \} 
    \end{equation}
    and consequently 
    \begin{equation}
        \{ W_\infty > 0 \} = \left\{ \sum_{n=1}^\infty \lambda_n^2 < \infty \right\} \cap \bigcap_{n=1}^\infty \{ \lambda_n(X_n - m) > -1 \}.
    \end{equation}
\end{theorem}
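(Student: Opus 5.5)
Write $Y_n := X_n - m$, $\sigma^2 := \Exp_P Y_1^2 > 0$ (by non-degeneracy), and $\mathcal F_n = \sigma(X_1,\dots,X_n)$. Almost sure convergence of $W^{\blamb}_n$ to some $W_\infty$ is immediate from the nonnegative martingale convergence theorem, so the content is the identification of $\{W_\infty=0\}$ up to null sets. I will work on the event $A := \bigcap_n \{\lambda_n Y_n > -1\}$, since on its complement some factor vanishes and $W_n$ is eventually zero. On $A$ every factor is positive, so $\log W_n = \sum_{k\le n} \log(1+\lambda_k Y_k)$ is finite. The aim is to show that, a.s. on $A$, $\log W_n \to -\infty$ iff $\sum \lambda_k^2 = \infty$.

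\textbf{Step 1: a conditional-expectation calculation.} Let $Z_k := \log(1+\lambda_k Y_k)$. By concavity, $\log(1+x) \le x$, so $\Exp[Z_k\mid\mathcal F_{k-1}] \le 0$. I want the sharper statement $\Exp[Z_k \mid \mathcal F_{k-1}] \le -c\min(\lambda_k^2,1)$ for some $c>0$ depending only on $P$. I will use $\log(1+x) \le x - \frac{x^2}{2(1+x_+)}$, together with the fact that $|\lambda Y|$ is bounded whenever $\lambda$ lies in the allowed interval (since $|Y|\le 1$ and $|\lambda| \le \max(1/m, 1/(1-m))$). This gives $\Exp[Z_k\mid\mathcal F_{k-1}] \le -c' \lambda_k^2 \sigma^2$.

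One edge case needs care: near the endpoints $\lambda=1/m$ or $-1/(1-m)$, the quantity $\log(1+\lambda Y)$ can be $-\infty$ with positive probability (if $P$ has an atom at $0$ or $1$), which only helps. Since $1+\lambda Y \in [0, C]$, the bound $\log(1+x)\le x - x^2/(2C)$ holds uniformly, which gives the strict-concavity drift directly.

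\textbf{Step 2: the direction $\sum\lambda_n^2<\infty$ on $A$ implies $W_\infty>0$.} Use the reverse inequality $\log(1+x)\ge x - x^2$ for $x\ge -1/2$. On $\{\sum\lambda_k^2<\infty\}$ we have $\lambda_k\to0$, so eventually $|\lambda_kY_k|\le 1/2$ and $\sum_k Z_k \ge \sum_k \lambda_k Y_k - \sum_k\lambda_k^2 Y_k^2$ eventually. The second sum is finite. The martingale $M_n=\sum_{k\le n}\lambda_k Y_k$ has conditional variance $\sigma^2\sum\lambda_k^2$, so it converges a.s. on $\{\sum\lambda_k^2<\infty\}$ by the standard local convergence result for $L^2$ martingales (proved by stopping at the first time the predictable quadratic variation exceeds $K$). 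Hence $\liminf \log W_n>-\infty$, and since finitely many initial factors are positive on $A$, we get $W_\infty>0$.

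\textbf{Step 3 (main obstacle): the direction $\sum\lambda_n^2=\infty$ implies $W_\infty=0$.} Here I decompose $\log W_n = \sum_{k\le n}(Z_k - \Exp[Z_k\mid\mathcal F_{k-1}]) + \sum_{k\le n}\Exp[Z_k\mid\mathcal F_{k-1}]$. By Step 1 the drift satisfies $D_n \le -c\sum_{k\le n}\lambda_k^2 \to -\infty$. The difficulty is that $Z_k$ is unbounded below, and may even equal $-\infty$ with positive probability.

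To handle this I will truncate, setting $\tilde Z_k := \max(Z_k, -1)$. Since $Z_k\le\tilde Z_k$, it suffices to show $\sum\tilde Z_k\to-\infty$. The inequality $\log(1+x)\le x - x^2/(2C)$ still yields $\Exp[\tilde Z_k\mid\mathcal F_{k-1}] \le -c\lambda_k^2$ after adjusting constants. This works because $\max(\log(1+x),-1)\le \psi(x)$ for a concave $\psi$ with $\psi(0)=0$, $\psi'(0)=1$, $\psi''\le -c$ on $[-1, C]$.

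The truncated increments are bounded, with conditional variance at most $C''\lambda_k^2$. Let $N_n$ be the martingale part, with $\langle N\rangle_n \le C''\sum\lambda_k^2 =: C''V_n$. There are two cases. On $\{\langle N\rangle_\infty<\infty\}$, $N_n$ converges while the drift tends to $-\infty$. On $\{\langle N\rangle_\infty=\infty\}$, the martingale strong law gives $N_n/\langle N\rangle_n\to0$, so $N_n = o(V_n)$, which is dominated by the drift $-cV_n$. In both cases $\sum\tilde Z_k\to-\infty$, hence $W_\infty=0$.

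Combining Steps 2 and 3 with the trivial handling of $A^c$ gives both displayed set identities, up to $P$-null sets.
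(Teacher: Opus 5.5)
\textbf{The truncation step.} One step in Step 3 is false as written, because the truncation level $-1$ is exactly the wrong one. The function $\phi(x)=\max(\log(1+x),-1)$ equals $-1$ on $[-1,e^{-1}-1]$, so $\phi(-1)=-1$ lies on the tangent line $y=x$. By contrast, any concave $\psi$ with $\psi(0)=0$, $\psi'(0)=1$, $\psi''\le -c$ satisfies $\psi(x)\le x-cx^2/2$, hence $\psi(-1)\le -1-c/2<\phi(-1)$. The claimed domination therefore fails on a whole neighbourhood of $x=-1$. That region is reachable: $\lambda_k$ may sit at or near $1/m$ or $-1/(1-m)$, and $P$ may charge neighbourhoods of $0$ or $1$. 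So your argument does not establish the drift bound $\Exp[\tilde Z_k\mid\mathcal F_{k-1}]\le -c\lambda_k^2$ (the bound is still true, but would need e.g.\ a separate compactness argument over $|\lambda|\ge\delta$).

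The cleanest repair is to truncate at $-2$ instead. For $x\in[-1,e^{-2}-1]$ one has $x-x^2/(2C)\ge -1-1/(2C)>-2$, so $\max(\log(1+x),-2)\le x-x^2/(2C)$ on all of $[-1,C-1]$. The rest of Step 3 then goes through unchanged: bounded increments, conditional variance at most $C''\lambda_k^2$, and the two-case martingale argument.

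Alternatively, drop truncation altogether. The pathwise bound $\log(1+x)\le x-x^2/(2C)$ already holds at $x=-1$, giving $\log W_n\le S_n-\frac{1}{2C}\sum_{k\le n}\lambda_k^2Y_k^2$ with $S_n=\sum_{k\le n}\lambda_kY_k$. Applying the martingale SLLN to $S_n$ and to $\sum_{k\le n}\lambda_k^2(Y_k^2-\sigma^2)$ then yields $\log W_n\le-(\sigma^2/(2C)-o(1))\sum_{k\le n}\lambda_k^2\to-\infty$ on $\{\sum\lambda_k^2=\infty\}$.

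\textbf{Comparison with the paper.} With this fix your proof is correct. Your Step 2 is the same as the paper's, but your treatment of the direction $\sum\lambda_n^2=\infty\Rightarrow W_\infty=0$ is genuinely different.
\begin{itemize}
\item The paper uses only the first-order bound $\log(1+x)\le x$, i.e.\ $W_n^{\blamb}\le\exp(S_n)$. It then quotes martingale divergence results: infinite quadratic variation with integrable maximal increments forces $S_n$ to diverge, and then $\liminf S_n=-\infty$. This gives $\liminf W_n^{\blamb}=0$, which a.s.\ convergence of the nonnegative martingale upgrades to $W_\infty=0$. It needs no second-order estimate and no care about $\log 0$.
\item Your Doob-decomposition route needs the strict-concavity drift and the martingale SLLN. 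In exchange, it proves $\log W_n\to-\infty$ directly, without going through $\liminf$, and gives the explicit rate $\log W_n\le-(c-o(1))\sum_{k\le n}\lambda_k^2$. It is essentially the argument the paper uses for the subGaussian analogue, \cref{thm:sos-crit-subg}, where the drift $-\frac12\sum\lambda_k^2$ is explicit.
\end{itemize}
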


We prove \cref{thm:sos-crit} in \cref{sec:pf-sos-crit}. The key steps include (1)
upper and lower bounding the log-wealth process $\log W_n^{\blamb}$ via
\begin{equation}
 x - x^2 \le \log(1+x) \le x
\end{equation}
for small $x$, reducing the problem to the analysis of the martingale $S_n= \sum_{k=1}^n \lambda_k(X_k - m)$ and its predictable quadratic variation;
and (2) martingale convergence and divergence characterizations found in \cite{hall2014martingale,Fitzsimmonslecturenotes}. 
It is worth noting that the event $\{ \lambda_n(X_n - m) = -1 \} $ only happens when both the bet fraction and the observation take extreme values:
\begin{equation}
     \{ \lambda_n(X_n - m) = -1 \}  = \left\{ \lambda_n = -\frac{1}{1-m}, X_n = 1 \right\} \cup \left\{ \lambda_n = \frac{1}{m}, X_n = 0 \right\}
\end{equation}
thus losing all current wealth.
We also note that \citet[Lemma 33]{ramdas2020admissible} also prove a sufficient condition for martingale bankruptcy, which, in this case, states that $P(\sum_{k=1}^\infty \lambda_k^2(X_k - m)^2 =\infty ) = 1$ implies $P(W_n^{\blamb} \to 0) = 1$. It is easy to see that our \cref{thm:sos-crit} implies their result, because $\sum_{k=1}^\infty \lambda_k^2(X_k - m)^2 =\infty$ implies $\sum_{k=1}^\infty \lambda_k^2 =\infty$, due to the boundedness $(X_k - m)^2 \le 1$.
Let us demonstrate in the following subsections that \cref{thm:sos-crit} facilitates the bankruptcy analysis of various betting strategies with explicit $(\lambda_n)$ expression.


\subsection{Almost sure divergence of $\sum_n \Omega_p(n^{-1})$}

Consider for a moment the simplest predictable plug-in betting strategy, the KT bettor \eqref{eqn:kt-lambda}.
From \cref{thm:sos-crit}, it is clear that KT \eqref{eqn:kt-lambda} with any $C$ is null-bankrupt if and only if the sum $\sum_n S_n^2/n^2$ diverges, where $S_n$ is the sum of $n$ i.i.d.\ mean-zero random variables $X_1 - m, \dots, X_n - m$. From the central limit theorem, we know that $S_n/n = \Omega_p(n^{-1/2})$, and therefore this sum is of the form $\sum_n \Omega_p(n^{-1})$. 

However, we were unable to locate existing work on the divergence of either the specific sum $\sum_n S_n^2/n^2$, or the general sums of form $\sum_n \Omega_p(n^{-1})$, even though these, at first sight, seem elementary problems. In particular, as we shall discuss soon, the law of the iterated logarithm (LIL) does \emph{not} imply $\sum_n S_n^2/n^2 = \infty$ almost surely. On the other hand, it is tempting to conjecture that, unlike $\sum_n \Omega_{a.s.}(n^{-1})=\infty$, the event $\sum_n \Omega_p(n^{-1}) = \infty$ does not necessarily happen almost surely, in light of various textbook counterexamples where convergence in probability does not imply convergence almost surely. Nevertheless, in the following theorem, we defy this conventional wisdom and assert that $\sum_n \Omega_p(n^{-1}) = \infty$ always happens almost surely.

\begin{theorem}\label{thm:sum-of-op}
    Let $(Z_n)$ be a nonnegative sequence of random variables such that $Z_n = \Omega_p(n^{-1})$. Then, $\Pr(\sum_{n=1}^\infty Z_n = \infty) = 1$.
\end{theorem}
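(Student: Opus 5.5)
The plan is a contradiction argument that uses only the first moment of a counting variable over dyadic blocks. Set $A = \{\sum_{n} Z_n < \infty\}$ and suppose $p := \Pr(A) > 0$. Apply the definition of $Z_n = \Omega_p(n^{-1})$ with $\varepsilon = p/2$. This gives $\delta > 0$ and $n_0$ such that the events $B_n := \{Z_n \ge \delta/n\}$ satisfy $\Pr(B_n) \ge 1 - p/2$ for all $n \ge n_0$. Consequently
\begin{equation*}
\Pr(A \cap B_n) \ge \Pr(A) - \Pr(B_n^c) \ge p/2 \quad \text{for all } n \ge n_0 .
\end{equation*}

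Next I would count hits in a dyadic block. For $N \ge n_0$ let $K_N := \sum_{n=N+1}^{2N} \mathbbm 1_{B_n}$ and $T_N := \sum_{n=N+1}^{2N} Z_n$. By the bound above and linearity, $\Exp[K_N \mathbbm 1_A] \ge Np/2$. On the other hand, each $n \in (N, 2N]$ satisfies $1/n \ge 1/(2N)$, so by nonnegativity of $Z_n$ we have $T_N \ge K_N \delta/(2N)$ pointwise. Fix $\eta > 0$, to be chosen below. On $A \cap \{T_N < \eta\}$ this gives $K_N < 2N\eta/\delta$. On $A \cap \{T_N \ge \eta\}$ we only use the trivial bound $K_N \le N$. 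Hence
\begin{equation*}
\frac{Np}{2} \le \Exp[K_N \mathbbm 1_A] \le \frac{2N\eta}{\delta} + N \, \Pr(A \cap \{T_N \ge \eta\}).
\end{equation*}

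It remains to show that the last probability vanishes, and this is the only step with real content. On $A$ the series converges, so its tail sums $\sum_{n > N} Z_n$ tend to $0$, and $T_N$ is bounded by such a tail sum. Therefore $\mathbbm 1_{A \cap \{T_N \ge \eta\}} \to 0$ pointwise, and $\Pr(A \cap \{T_N \ge \eta\}) \to 0$ by dominated (or bounded) convergence.

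Choose $\eta = p\delta/8$, so that $2N\eta/\delta = Np/4$. Dividing the display by $N$ and letting $N \to \infty$ yields $p/2 \le p/4$, which contradicts $p > 0$. Hence $\Pr(A) = 0$, that is, $\Pr(\sum_n Z_n = \infty) = 1$.

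The expected obstacle is conceptual rather than technical. The events $B_n$ may be arbitrarily dependent and need not occur infinitely often along a fixed path in any Borel--Cantelli sense, so a pathwise argument is hopeless. The dyadic averaging resolves this: on a positive-probability event where the series converges, a positive fraction of the indices in $(N, 2N]$ would still have to be hits on average, and that forces a block sum of order one, which convergence of the series rules out.
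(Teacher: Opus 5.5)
Your proof is correct and complete. It uses the same mechanism as the paper's proof: on the convergence event, the indices with $Z_n\ge\delta/n$ must have vanishing density, whereas their expected density is at least $1-\varepsilon$. You implement both halves of this with different tools.

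\begin{itemize}
\item \textbf{From convergence to vanishing density.} The paper turns $\sum_n \id_{A_n}/n<\infty$ into the Ces\`aro statement $\frac{1}{n-N}\sum_{k=N+1}^n \id_{A_k}\to 0$ via Kronecker's lemma. You replace this with the window comparison $T_N\ge K_N\delta/(2N)$ on $(N,2N]$, together with the Cauchy criterion for the series on $A$.
\item \textbf{The probabilistic step.} The paper bounds $\Pr(G_n<1/2)\le 2\varepsilon$ by Markov's inequality applied to $1-G_n$. It then uses Fatou's lemma for events, $\Pr(\liminf_n E_n)\le \liminf_n \Pr(E_n)$, which gives $\Pr(C)\le 2\varepsilon$ directly. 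You instead bound $\Exp[K_N\id_A]$ from both sides, the lower bound coming from $\Pr(A\cap B_n)\ge \Pr(A)-\varepsilon$. You then send $\Pr(A\cap\{T_N\ge\eta\})\to 0$ by bounded convergence. Because you calibrate $\varepsilon$ to $\Pr(A)$, your argument closes by contradiction.
\end{itemize}

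Your route is slightly more elementary and self-contained, needing only the Cauchy criterion and bounded convergence. The paper's route is direct rather than by contradiction. It is also phrased around the Ces\`aro hit density, which matches its remark that convergence would force the number of hits up to time $n$ to grow sublinearly.

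One small correction to your closing commentary. Since $\Pr(B_n)\ge 1-\varepsilon$ for all large $n$, Fatou's lemma for events gives $\Pr(\limsup_n B_n)\ge 1-\varepsilon$. So the $B_n$ do occur infinitely often with high probability. The real obstacle is that infinitely many hits may be too sparse for $\sum \delta/n$ over them to diverge. That is exactly why a density argument, yours or the paper's, is needed.
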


The proof of \cref{thm:sum-of-op}, a result arguably of independent interest, is in \cref{sec:pf-sum-of-op}.
We can immediately apply the theorem to $Z_n = S_n^2/n^2 = \Omega_p(n^{-1})$ due to the central limit theorem. This special case, we believe, also deserves its separate attention and dissemination to the broader audience due to its simple form but not-that-simple proof. We therefore write it down separately.

\begin{corollary}\label[corollary]{lem:div-sample-mean}
    Let $Y_1,Y_2,\dots$ be i.i.d.\ random variables with mean 0 and variance $\sigma^2 > 0$ and $S_n = Y_1+\dots+Y_n$.
    Then,
    \begin{equation}
        \sum_{n=1}^\infty \frac{S_n^2}{n^2} = \infty \quad \text{almost surely}.
    \end{equation}
\end{corollary}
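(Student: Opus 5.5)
The plan is to derive the corollary directly from \cref{thm:sum-of-op} with $Z_n := S_n^2/n^2$. These are nonnegative, so the only thing to check is the hypothesis $Z_n = \Omega_p(n^{-1})$. Equivalently, we need $n Z_n = (S_n/\sqrt{n})^2 = \Omega_p(1)$.

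The key step is the classical central limit theorem. Since the $Y_k$ are i.i.d.\ with mean zero and finite variance $\sigma^2>0$, we have $S_n/\sqrt n \Rightarrow \mathcal N(0,\sigma^2)$. By the continuous mapping theorem, $nZ_n \Rightarrow \sigma^2 \chi^2_1$, and this limit law does not charge $0$.

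To confirm the $\Omega_p(1)$ property concretely, fix $\varepsilon>0$. Choose $\delta>0$ small enough that $\Pr(\sigma^2 G^2 \le 2\delta) \le \varepsilon/2$ for $G\sim\mathcal N(0,1)$; this is possible because the Gaussian has a bounded density, so the probability is $O(\sqrt\delta)$. The limit law has no atom at $2\delta$, which makes $[0,2\delta]$ a continuity set. Weak convergence then gives $\Pr(nZ_n \le 2\delta) \to \Pr(\sigma^2G^2\le 2\delta)\le \varepsilon/2$. Hence, for all but finitely many $n$,
\[
\Pr(nZ_n \ge \delta) \ge \Pr(nZ_n > 2\delta) \ge 1-\varepsilon .
\]
This is exactly the statement $Z_n=\Omega_p(n^{-1})$ in the sense of the paper's notation.

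Applying \cref{thm:sum-of-op} then gives $\Pr(\sum_n S_n^2/n^2=\infty)=1$. There is no real obstacle at this level, since all the difficulty is absorbed by \cref{thm:sum-of-op}. The only point needing care is the positivity $\sigma^2>0$: it guarantees that the limit is nondegenerate and so not concentrated at zero.
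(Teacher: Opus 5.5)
Your proposal is correct and matches the paper's own argument: the paper obtains the corollary by applying \cref{thm:sum-of-op} to $Z_n = S_n^2/n^2$, with $Z_n = \Omega_p(n^{-1})$ supplied by the central limit theorem, and your continuity-set computation simply writes out that CLT step in detail. The paper also gives a separate proof via Donsker's invariance principle and Kronecker's lemma in the appendix, but your route is the primary one.
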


We point out that an invalid ``one-line'' proof attempt of \cref{lem:div-sample-mean} is the following: via the law of the iterated logarithm (LIL), $|S_n| \asymp \sqrt{n \log \log n}$ almost surely, and therefore $\sum_n S_n^2/n^2 \asymp \sum_n n^{-1} \log \log n = \infty$. The pitfall, we note, is that LIL only ensures that a \emph{subsequence} $(n_k)$ with $S_{n_k} = \Omega_{a.s.} (\sqrt{n_k \log\log n_k})$, and $\sum_{k} n_k^{-1} \log\log n_k$ may converge if  this subsequence $(n_k)$ is ``sparse'' (e.g.\ $n_k = k^2$). On the contrary, our statement is based conceptually on the fact that sample sizes $n$ such that $Z_n \asymp 1/n$ occupy a non-sparse subset among natural numbers: if one looks at the proof, the inclusions \eqref{eqn:incl1}, \eqref{eqn:incl2} state that $\sum_n Z_n$ converges only when, as $n$ grows, the number of events $A_k = \{ Z_k \ge \delta/k \}$ among $1\le k\le n$ happen grows sublinearly, which, we later show, is of low probability. This fact is not captured by LIL. Finally, an alternative proof of \cref{lem:div-sample-mean} based on Donsker's invariance principle is provided in \cref{sec:donsker}.

\subsection{Null bankruptcy of KT, GRAPA, aGRAPA, etc.} \label{sec:bankrupt-pred}

The combination of \cref{thm:sos-crit,thm:sum-of-op} leads to the following sufficient condition for bankruptcy. Namely, almost sure null bankruptcy happens if the bet fractions $(\lambda_n)$ have a decay rate $\Omega_p(n^{-1/2})$ or $\Omega_{a.s.}((n\log n)^{-1/2})$ under the null.

\begin{corollary}[$n^{-1/2}$ criterion] \label[corollary]{cor:sq-crit} Let $P$ be a non-degenerate distribution on $[0,1]$ with mean $m$ and $(X_n)\iid P$. Let $\blamb = (\lambda_n)$ be a predictable process taking values in $[-\frac{1}{1-m}, \frac{1}{m}]$.
    Suppose that either $\lambda_n = \Omega_p(n^{-1/2})$ or $\lambda_n = \Omega_{a.s.}((n\log n)^{-1/2})$ under $P$.
    Then, the $\blamb$-betting wealth process
    \begin{equation}
        W_n^{\blamb}=\prod_{k=1}^n (1 + \lambda_k(X_k - m))
    \end{equation}
    converges almost surely to 0.
\end{corollary}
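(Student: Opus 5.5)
By \cref{thm:sos-crit}, it suffices to show that $\sum_{n} \lambda_n^2 = \infty$ holds $P$-almost surely. On that event we have $W_\infty = 0$. The event $\bigcup_n \{\lambda_n(X_n-m)=-1\}$ only enlarges $\{W_\infty=0\}$, so it can be ignored. Since $W_n^{\blamb}\to W_\infty$ almost surely by that theorem, we would conclude $W_n^{\blamb}\to 0$ almost surely. The proof therefore splits into two cases according to the hypothesis on the decay rate of $(\lambda_n)$.

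\emph{Case 1: $\lambda_n = \Omega_p(n^{-1/2})$.} Set $Z_n := \lambda_n^2 \ge 0$. We check that $Z_n = \Omega_p(n^{-1})$. Fix $\varepsilon>0$ and take the $\delta>0$ given by the definition of $\lambda_n=\Omega_p(n^{-1/2})$. Then, for all but finitely many $n$,
\begin{equation*}
\Pr(Z_n \ge \delta^2 n^{-1}) = \Pr(|\lambda_n| \ge \delta n^{-1/2}) \ge 1-\varepsilon .
\end{equation*}
\cref{thm:sum-of-op} then gives $\Pr(\sum_n \lambda_n^2 = \infty)=1$ directly. Predictability of $(\lambda_n)$ plays no role in this step; it is needed only to invoke \cref{thm:sos-crit}.

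\emph{Case 2: $\lambda_n = \Omega_{a.s.}((n\log n)^{-1/2})$.} Fix a path in the probability-one event where this holds. On that path there exist $\delta>0$ (possibly path-dependent) and $N$ such that $|\lambda_n|\ge \delta (n\log n)^{-1/2}$ for all $n\ge N$. Hence
\begin{equation*}
\sum_{n\ge N}\lambda_n^2 \ge \delta^2 \sum_{n\ge N}\frac{1}{n\log n} = \infty,
\end{equation*}
because $\sum 1/(n\log n)$ diverges (by the integral test, $\int \d x/(x\log x) = \log\log x$). So $\sum_n\lambda_n^2=\infty$ almost surely.

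There is no real obstacle here: all of the difficulty is absorbed by \cref{thm:sos-crit,thm:sum-of-op}. The only points needing care are to make sure the $\Omega_{a.s.}$ constant is allowed to depend on the path, which does not affect the divergence argument, and to note that the all-in loss events only help bankruptcy. Together these imply $P(W_n^{\blamb}\to 0)=1$.
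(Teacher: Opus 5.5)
Your proof is correct and follows the paper's own route. The paper presents the corollary as a direct combination of \cref{thm:sos-crit} with \cref{thm:sum-of-op}, applied to $Z_n=\lambda_n^2$, in the $\Omega_p(n^{-1/2})$ case, and with the divergence of $\sum_n 1/(n\log n)$ in the $\Omega_{a.s.}((n\log n)^{-1/2})$ case.
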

Note that
$\lambda_n = \Omega_p(n^{-1/2})$ and $\lambda_n = \Omega_{a.s.}((n\log n)^{-1/2})$ are two very different conditions and neither implies the other.
Various predictable plug-in betting and hedging strategies mentioned in \cref{sec:intro} and by \cite{waudby2024estimating} fall into this category. First, the standard CLT for the sample mean immediately implies the following bankruptcy guarantee for KT.
\begin{proposition} Let $(X_n)\iid P$ with mean $m$ and variance $\sigma^2 > 0$. The KT bet fractions $\lKT_n$ defined in \eqref{eqn:kt-lambda} satisfies the asymptotic normality
    \begin{equation}\label{eqn:kt-lambda-clt}
   \sqrt{n} \lKT_n = \frac{1/2+\sum_{k=1}^{n-1}(X_k - m)}{C \sqrt{n}} \stackrel{\text{weakly}}{\longrightarrow} \mathcal{N}(0,C^{-2}\sigma^2).
\end{equation}
Therefore $\lKT_n = \Omega_p(n^{-1/2})$, and consequently the KT wealth process $\prod_{k=1}^n(1+\lKT_k(X_k - m))$ converges to 0 almost surely.
\end{proposition}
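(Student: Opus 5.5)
The proof has three steps: a central limit theorem for $\sqrt{n}\,\lKT_n$, a passage from that limit to $\Omega_p(n^{-1/2})$, and an appeal to \cref{cor:sq-crit}.

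\textbf{Step 1: the CLT.} Write $S_{n-1} = \sum_{k=1}^{n-1}(X_k - m)$, so that
\begin{equation}
\sqrt{n}\,\lKT_n = \frac{1}{2C\sqrt{n}} + \frac{\sqrt{n-1}}{C\sqrt{n}}\cdot\frac{S_{n-1}}{\sqrt{n-1}} .
\end{equation}
The summands $X_k - m$ are i.i.d.\ with mean $0$ and finite variance $\sigma^2 > 0$. Finiteness is automatic since $X_k \in [0,1]$, and positivity holds because $P$ is non-degenerate. The classical CLT therefore gives $S_{n-1}/\sqrt{n-1} \to \mathcal N(0,\sigma^2)$ weakly. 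In the display above, the first term is deterministic and tends to $0$, and the factor $\sqrt{(n-1)/n}/C$ tends to $1/C$. Slutsky's lemma then yields the limit $\mathcal N(0, C^{-2}\sigma^2)$ stated in \eqref{eqn:kt-lambda-clt}.

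\textbf{Step 2: from weak convergence to $\Omega_p(n^{-1/2})$.} The limit law $\mathcal N(0, C^{-2}\sigma^2)$ has $\sigma > 0$, so it is continuous and places no mass at $0$. To make this precise, fix $\varepsilon > 0$ and choose $\delta > 0$ with $\Pr(|Z| < \delta) \le \varepsilon/2$ for $Z \sim \mathcal N(0, C^{-2}\sigma^2)$. The set $\{|z| < \delta\}$ is a continuity set of this law, so the portmanteau theorem gives
\begin{equation}
\Pr\bigl(|\sqrt{n}\,\lKT_n| < \delta\bigr) \le \varepsilon
\end{equation}
for all large $n$. This is exactly $\lKT_n = \Omega_p(n^{-1/2})$.

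\textbf{Step 3: bankruptcy.} \cref{cor:sq-crit} requires $(\lambda_n)$ to be predictable and $[-\frac{1}{1-m}, \frac{1}{m}]$-valued. Predictability is clear because $\lKT_n$ depends only on $X_1, \dots, X_{n-1}$. The range condition needs a short check:

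- The numerator of $\lKT_n$ lies in $[\frac12 - (n-1)m,\ \frac12 + (n-1)(1-m)]$.
- With $C \ge m(1-m)$, dividing by $Cn$ gives a value at most $\frac{1/2 + (n-1)(1-m)}{m(1-m)n}$, which should be compared with $\frac{1}{m}$.

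This bound is essentially immediate for $C$ large. For general $C \ge m(1-m)$ it may require clipping $\lKT_n$ to the interval, as is implicit in the paper's phrase "appropriate constant". Clipping is harmless, because the clipped value equals $\lKT_n$ whenever $|\lKT_n|$ is small, so it still satisfies $\Omega_p(n^{-1/2})$. With both conditions verified, \cref{cor:sq-crit} gives $\prod_{k=1}^n (1+\lKT_k(X_k - m)) \to 0$ almost surely.

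The range check in Step 3 is the only non-routine point. All the real difficulty has already been absorbed into \cref{thm:sum-of-op} and \cref{thm:sos-crit}.
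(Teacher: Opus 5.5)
Your proposal is correct and follows the paper's own route. The CLT with Slutsky gives the normal limit, a limit law with no atom at $0$ gives $\lKT_n = \Omega_p(n^{-1/2})$, and \cref{cor:sq-crit} (that is, \cref{thm:sos-crit} combined with \cref{thm:sum-of-op}) yields bankruptcy. Your range check raises a legitimate point that the paper glosses over: for $C = m(1-m)$ and $m > 1/2$, already $\lKT_1 = \frac{1}{2m(1-m)} > \frac{1}{m}$. Note that clipping as you describe only proves bankruptcy of the clipped process; to recover the statement for the unclipped product, clip to a strictly interior interval and use $\lKT_n \to 0$ a.s.\ so that clipping acts only finitely often.
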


Second, GRAPA \eqref{eqn:grapa-lambda} is almost surely null-bankrupt because, as a standard M-estimator, the GRAPA bet fractions $(\lgr_n)$ also satisfy asymptotic normality and an $\Omega(n^{-1/2})$ decay rate under any non-degenerate null.
\begin{proposition}\label[proposition]{prop:grapa-bahadur} Let $(X_n)\iid P$ with mean $m$ and variance $\sigma^2 > 0$.
    The GRAPA bet fraction $\lgr_n$ defined in \eqref{eqn:grapa-lambda} satisfies the almost sure Bahadur expansion
    \begin{equation}\label{eqn:bahadur}
         \sqrt{n}\lgr_{n+1} = \frac{1}{\sigma^2} \cdot \frac{\sum_{k=1}^n (X_k - m)}{\sqrt{n}}  + o_{a.s.}(n^{-1/4} \log n) \stackrel{\text{weakly}}{\longrightarrow} \mathcal{N}(0,\sigma^{-2}).
    \end{equation}
    Therefore $\lgr_n = \Omega_p(n^{-1/2})$, and consequently the GRAPA wealth process $\prod_{k=1}^n(1+\lgr_k(X_k - m))$ converges to 0 almost surely.
\end{proposition}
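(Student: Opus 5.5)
The plan has two stages. First, a deterministic Bahadur-type linearization of the GRAPA maximizer around $0$. Then the strong law and the law of the iterated logarithm control the remainder almost surely. Write $Y_k = X_k - m$ and $S_n = \sum_{k=1}^n Y_k$. Let
\begin{equation}
G_n(\lambda) = \frac1n\sum_{k=1}^n \log(1+\lambda Y_k),
\end{equation}
so that $\lgr_{n+1} = \argmax G_n$ over $[-\frac{1}{1-m},\frac1m]$. The function $G_n$ is concave, and strictly concave as soon as some $Y_k\neq 0$, so the maximizer is unique. Moreover $G_n'(0) = S_n/n$ and $G_n''(0) = -\frac1n\sum Y_k^2 =: -\widehat V_n$.

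\textbf{Step 1 (localization).} Show that $\lgr_{n+1}\to 0$ almost surely. Let $\varepsilon>0$ be fixed and small enough that $1\pm\varepsilon Y_k$ stays bounded away from $0$. By concavity, it suffices to show that $G_n'(\varepsilon)<0<G_n'(-\varepsilon)$ eventually. By the strong law,
\begin{equation}
G_n'(\pm\varepsilon)\to \Exp_P \frac{Y}{1\pm\varepsilon Y}.
\end{equation}
These limits are $\mp\varepsilon\,\Exp_P[Y^2/(1\pm\varepsilon Y)]+0$ in sign, that is, strictly negative at $+\varepsilon$ and strictly positive at $-\varepsilon$, because $\Exp_P Y=0$ and $\sigma^2>0$. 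Hence the maximizer is eventually interior and lies in $(-\varepsilon,\varepsilon)$, so it satisfies $G_n'(\lgr_{n+1})=0$.

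\textbf{Step 2 (expansion).} Expand the first-order condition by Taylor's theorem:
\begin{equation}
0 = \frac{S_n}{n} - \lambda\widehat V_n + \lambda^2 R_n(\lambda), \qquad \lambda=\lgr_{n+1},
\end{equation}
where
\begin{equation}
R_n(\lambda)=\frac1n\sum_k \frac{Y_k^3}{(1+\theta_k\lambda Y_k)^{3}}
\end{equation}
is bounded on $|\lambda|\le\varepsilon$ by a deterministic constant. Since $\widehat V_n\to\sigma^2$ almost surely, we get $|\lambda|\lesssim |S_n|/n$. The LIL gives $|S_n|/n = O_{a.s.}(\sqrt{\log\log n/n})$. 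Therefore
\begin{equation}
\lambda = \frac{S_n}{n\widehat V_n} + O_{a.s.}\!\left(\frac{\log\log n}{n}\right).
\end{equation}
Next replace $\widehat V_n$ by $\sigma^2$. By the LIL applied to the bounded i.i.d. variables $Y_k^2-\sigma^2$, we have $|\widehat V_n-\sigma^2| = O_{a.s.}(\sqrt{\log\log n/n})$. The resulting error is
\begin{equation}
\frac{|S_n|}{n}\,|\widehat V_n^{-1}-\sigma^{-2}| = O_{a.s.}\!\left(\frac{\log\log n}{n}\right).
\end{equation}
If $\sum_k Y_k^2$ has finite variance it is automatically bounded here; if instead $Y^2$ is degenerate, then $\widehat V_n=\sigma^2$ exactly and this error vanishes. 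Multiplying by $\sqrt n$ yields
\begin{equation}
\sqrt n\,\lgr_{n+1} = \frac{1}{\sigma^2}\frac{S_n}{\sqrt n} + O_{a.s.}\!\left(\frac{\log\log n}{\sqrt n}\right),
\end{equation}
which is $o_{a.s.}(n^{-1/4}\log n)$, stronger than what is claimed.

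\textbf{Step 3 (conclusions).} The CLT gives $S_n/(\sigma^2\sqrt n)\to\mathcal N(0,\sigma^{-2})$ weakly. An almost surely vanishing remainder does not change the weak limit (Slutsky), so \eqref{eqn:bahadur} holds. The limit distribution does not charge $0$, so $\sqrt n\lgr_{n+1}=\Omega_p(1)$, and hence $\lgr_n=\Omega_p(n^{-1/2})$; the index shift $n\mapsto n+1$ is harmless. Finally, apply \cref{cor:sq-crit}. This requires that $\lgr_n$ is predictable, which holds since it depends only on $X_1,\dots,X_{n-1}$, and that it takes values in $[-\frac1{1-m},\frac1m]$, which holds by construction. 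The corollary then shows that the GRAPA wealth converges to $0$ almost surely.

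The main obstacle is Step 1. For finitely many $n$ the maximizer may sit at the boundary, for instance when all early $Y_k$ share a sign. We need the interior first-order condition eventually and almost surely, and the concavity plus strong law argument at $\pm\varepsilon$ handles this cleanly.
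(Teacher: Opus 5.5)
Your proof is correct, and it takes a genuinely different route from the paper. The paper does not expand the score equation by hand. It treats $\lgr_{n+1}=\lKL_n$ as the minimizer of the convex criterion $Q_n(\lambda)=-\frac1n\sum_k\log(1+\lambda(X_k-m))$ and verifies the hypotheses of Theorem~5 of \cite{niemiro1992asymptotics}:
\begin{itemize}
\item convexity, finiteness of $Q$, and uniqueness of the population minimizer $\lambda^*=0$;
\item $Q''(0)=\sigma^2>0$ and the local bound $|Q'(\lambda)-\lambda Q''(0)|\le 8\lambda^2$;
\item an $L^2$-Lipschitz bound on the score and a tenth-moment bound.
\end{itemize}
The closed domain is handled by a remark that Niemiro's minimizer may be defined arbitrarily off the open interval. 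The $o_{a.s.}(n^{-1/4}\log n)$ remainder is simply what that general theorem delivers, since it is built to cover non-smooth convex losses too.

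Your argument instead uses the smoothness of $\lambda\mapsto\log(1+\lambda y)$ on $|\lambda|\le\varepsilon$. The concavity-plus-SLLN check at $\pm\varepsilon$ makes the interior first-order condition hold eventually, which deals with the boundary more transparently than the paper's remark. The uniformly bounded third-order term then gives $|\lgr_{n+1}|\lesssim|S_n|/n$. Applying the LIL to $S_n$ and to $\sum_k(Y_k^2-\sigma^2)$ yields the sharper remainder $O_{a.s.}(n^{-1/2}\log\log n)$, the classical rate for smooth M-estimators.

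In short, the paper's route is shorter on the page and carries over to criteria without smoothness. Yours is self-contained and elementary, and gives a strictly better rate.

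One step to state explicitly: $|\lambda|\lesssim|S_n|/n$ needs the denominator $\widehat V_n-\lambda R_n(\lambda)$ to stay bounded away from zero. That follows from $\lgr_{n+1}\to 0$ in Step~1 together with the deterministic bound on $R_n$, not from $\widehat V_n\to\sigma^2$ alone.
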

The proof of \cref{prop:grapa-bahadur} involves checking that the M-estimation problem for $\lambda \mapsto \Exp_P(\log(1+\lambda(X_1 - m)))$ satisfies all of the seven regularity conditions for the Bahadur expansion result of M-estimators due to \cite{niemiro1992asymptotics}. We put these details in \cref{sec:grapa-bahadur}. As a quick sanity check of these two asymptotics, we note that KT with $C = m(1-m) = \sigma^2$ coincides with GRAPA in the Bernoulli coin-toss case (see e.g.\ \citet[Section 4]{orabona2016coin}).

Next, the approximate GRAPA (aGRAPA) bettor by \citet[Section B.3]{waudby2024estimating} reads
\begin{equation}
   \lagr_n = \left( -\frac{C}{1-m} \right) \vee \frac{\widehat \mu_{n-1} - m}{\widehat\sigma^2_{n-1}+(\widehat \mu_{n-1} - m)^2} \wedge \frac{C}{m} ,
\end{equation}
where $\widehat \mu_{n-1}$ and $\widehat\sigma^2_{n-1}$ are the sample mean and variance of $X_1,\dots, X_{n-1}$, and $C\in(0,1)$ a clipping constant (cf.\ the null Bahadur expansion of GRAPA \eqref{eqn:bahadur}). The consistency of these estimators as well as the standard CLT immediately give rise to its asymptotic normality similar to that of KT and GRAPA.
\begin{proposition} Let $(X_n)\iid P$ with mean $m$ and variance $\sigma^2 > 0$.
    The aGRAPA bet fraction $\lagr_n$ defined above satisfies the asymptotic normality
    \begin{equation}
         \sqrt{n}\lagr_{n}  \stackrel{\text{weakly}}{\longrightarrow} \mathcal{N}(0,\sigma^{-2}).
    \end{equation}
    Therefore $\lagr_n = \Omega_p(n^{-1/2})$, and the aGRAPA wealth process $\prod_{k=1}^n(1+\lagr_k(X_k - m))$ converges to 0 almost surely.
\end{proposition}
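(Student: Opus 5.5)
We prove asymptotic normality of $\sqrt{n}\lagr_n$ by first analyzing the unclipped ratio and then showing the clipping is eventually inactive. Write $\widehat\mu_{n-1} - m = \frac{1}{n-1}\sum_{k=1}^{n-1}(X_k - m)$. The standard CLT gives
\[
\sqrt{n-1}\,(\widehat\mu_{n-1}-m) \to \mathcal N(0,\sigma^2)
\]
weakly. The strong law gives $\widehat\mu_{n-1}\to m$ and $\widehat\sigma^2_{n-1}\to\sigma^2>0$ almost surely, under either the $1/(n-1)$ or the $1/(n-2)$ normalization. Hence the denominator satisfies
\[
D_{n-1} := \widehat\sigma^2_{n-1}+(\widehat\mu_{n-1}-m)^2 \to \sigma^2
\]
almost surely. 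In particular $D_{n-1}$ is eventually bounded away from zero, so the ratio is well defined for all large $n$.

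Denote the unclipped ratio by $R_n := (\widehat\mu_{n-1}-m)/D_{n-1}$. Then
\[
\sqrt{n}\,R_n = \sqrt{\tfrac{n}{n-1}}\cdot\frac{\sqrt{n-1}\,(\widehat\mu_{n-1}-m)}{D_{n-1}}.
\]
The factor $\sqrt{n/(n-1)}\to1$, the numerator converges weakly to $\mathcal N(0,\sigma^2)$, and $D_{n-1}\to\sigma^2$ almost surely. By Slutsky's lemma, $\sqrt n R_n \to \mathcal N(0,\sigma^2/\sigma^4)=\mathcal N(0,\sigma^{-2})$ weakly.

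It remains to handle the clipping. Since $R_n\to 0/\sigma^2 = 0$ almost surely and the clipping bounds $-C/(1-m)<0<C/m$ are fixed, we have $\lagr_n = R_n$ for all sufficiently large $n$, almost surely. (If $m\in\{0,1\}$, no non-degenerate $P$ has mean $m$, so this case does not arise.) Therefore $\sqrt n(\lagr_n - R_n)\to0$ almost surely, and a further application of Slutsky's lemma yields the stated limit.

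Because the limit $\mathcal N(0,\sigma^{-2})$ puts no mass at $0$, the fact recalled in the Notation paragraph gives $\sqrt n\,\lagr_n=\Omega_p(1)$, i.e.\ $\lagr_n=\Omega_p(n^{-1/2})$. Moreover, $\lagr_n$ is predictable and takes values in $[-\frac{1}{1-m},\frac1m]$, since $C<1$. \Cref{cor:sq-crit} then yields $W_n\to0$ almost surely.

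There is no genuine obstacle here; the only care needed is arguing that the clipping is eventually inactive, which is handled by the almost sure convergence $R_n\to0$ above.
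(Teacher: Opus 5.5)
Your proof is correct and follows the same route as the paper: consistency of $\widehat\mu_{n-1}$ and $\widehat\sigma^2_{n-1}$, together with the CLT and Slutsky's lemma, gives the $\mathcal{N}(0,\sigma^{-2})$ limit, and \cref{cor:sq-crit} then yields almost sure bankruptcy. Your argument that the clipping is eventually inactive is a detail the paper leaves implicit, and you handle it correctly: the unclipped ratio tends to $0$ almost surely, and $0$ lies in the interior of $[-\frac{C}{1-m},\frac{C}{m}]$.
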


Finally,  predictable hedging betting strategies of form \eqref{eqn:hedged}, proposed on grounds of tightness of the implied confidence sequences (as opposed to wealth growth), are also almost surely null-bankrupt as $\lambda_n$ is set to always be $\Omega_{a.s.}(n^{-1/2})$ or $\Omega_{a.s.}((n \log n)^{-1/2})$, regardless of null or alternative.

\begin{proposition} Let $(X_n)\iid P$ with mean $m$ and variance $\sigma^2 > 0$.
    Let $\lambda_n^{\mathsf{PrH}}$ be the bet fraction defined in \eqref{eqn:hedging-rate}. 
    Then, $\lambda_n^{\mathsf{PrH}} = \Omega_{a.s.}((n \log n)^{-1/2})$, and consequently the hedged wealth process $0.5\cdot\prod_{k=1}^n(1+\lambda_k^{\mathsf{PrH}}(X_k - m)) +0.5\cdot \prod_{k=1}^n(1-\lambda_k^{\mathsf{PrH}}(X_k - m))$ converges to 0 almost surely.
\end{proposition}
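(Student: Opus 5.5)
Two claims need proof: the a.s. lower bound $\lambda_n^{\mathsf{PrH}} = \Omega_{a.s.}((n\log n)^{-1/2})$, and the almost sure convergence to 0 of the hedged wealth. Both reduce to \cref{cor:sq-crit} once the rate is established.

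\emph{Step 1: the rate.} Because $\lambda_n^{\mathsf{PrH}}$ is nonnegative, the lower clip at $-C/(1-m)$ never binds, so
\begin{equation*}
\lambda_n^{\mathsf{PrH}} = \min\left\{ \sqrt{\frac{2\log(2/\alpha)}{\widehat\sigma_{n-1}^2\, n\log(n+1)}},\ \frac{C}{m}\right\}.
\end{equation*}
The variance estimator $\widehat\sigma^2_{n-1}$ is assumed consistent in the almost sure sense; for the usual empirical variance, possibly regularized, this follows from the SLLN. Then $\widehat\sigma^2_{n-1}\to\sigma^2\in(0,\infty)$ almost surely. In particular, on a probability-one event we have $\widehat\sigma_{n-1}^2 \le 2\sigma^2$ for all large $n$. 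There the square-root term is at least $c\,(n\log(n+1))^{-1/2}$ with $c=\sqrt{\log(2/\alpha)/\sigma^2}>0$. The constant cap $C/m$ exceeds this once $n$ is large, so
\begin{equation*}
\liminf_n \lambda_n^{\mathsf{PrH}}\sqrt{n\log n} \ge c > 0 \quad\text{a.s.},
\end{equation*}
which is the desired $\Omega_{a.s.}((n\log n)^{-1/2})$ bound.

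\emph{Step 2: bankruptcy of each component.} We check that $\pm\lambda_n^{\mathsf{PrH}}$ are predictable and take values in $[-\frac{1}{1-m},\frac1m]$. This holds because $C<1$ and $\lambda_n^{\mathsf{PrH}}\le C\min(\frac1m,\frac1{1-m})$ up to the stated clipping. The sign flip does not change $|\lambda_n|$, so both fraction sequences $\lambda_n^{\mathsf{PrH}}$ and $-\lambda_n^{\mathsf{PrH}}$ are $\Omega_{a.s.}((n\log n)^{-1/2})$. \cref{cor:sq-crit} therefore gives $\prod_{k=1}^n(1\pm\lambda_k^{\mathsf{PrH}}(X_k-m))\to 0$ almost surely for each sign. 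Concretely, $\sum_n \lambda_n^2 \gtrsim \sum_n (n\log n)^{-1} = \infty$, so \cref{thm:sos-crit} applies directly.

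\emph{Step 3: the hedged process.} The hedged wealth is an average of two nonnegative processes, each tending to 0 on a probability-one event. The intersection of these two events also has probability one, and on it the average tends to 0. The only potentially delicate point is Step 1, namely the a.s. (not merely in-probability) consistency of $\widehat\sigma^2_{n-1}$ with a strictly positive limit. This needs $\sigma^2>0$ (non-degeneracy) and is immediate from the SLLN applied to $X_k$ and $X_k^2$.
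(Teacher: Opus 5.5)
Your proof is correct and follows the paper's own route: the definition \eqref{eqn:hedging-rate} together with an almost surely bounded variance estimate gives $\lambda_n^{\mathsf{PrH}}=\Omega_{a.s.}((n\log n)^{-1/2})$, and then \cref{cor:sq-crit} (that is, $\sum_n\lambda_n^2\gtrsim\sum_n (n\log n)^{-1}=\infty$ in \cref{thm:sos-crit}) sends each component $\prod_k(1\pm\lambda_k^{\mathsf{PrH}}(X_k-m))$, and hence their average, to $0$ almost surely. One small quibble: the cap $C/m$ alone does not give $\lambda_n^{\mathsf{PrH}}\le C/(1-m)$ when $m<1/2$, so your range check in Step 2 rests on the paper's standing requirement that hedging fractions lie in $[0,\min(\frac{1}{1-m},\frac1m))$ rather than on the formula itself.
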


As a short summary of this section, we have shown that many of the proposed ``good'' betting strategies are null-bankrupt almost surely via the sum-of-square criterion, \cref{thm:sos-crit}. It is natural to ask if there are ``equally good'' strategies that do not go bankrupt. We delay this profound question to \cref{sec:all}, after discussing mixture strategies among which the question has a much clearer answer.

\section{Bankruptcy of mixture strategies}

We next study the null bankruptcy behavior of mixture strategies \eqref{eqn:mixture-wealth}. While it is true that mixture strategies form a subclass of predictable plugin strategies via $\lambda_k^\pi = \frac{\int \lambda W_{k-1}^\lambda  \pi(\d \lambda) }{ \int W_{k-1}^\lambda  \pi(\d \lambda) }$, and therefore \cref{thm:sos-crit} implies the $\pi$-mixture strategy bankrupts if and only if $\sum_k (\lambda_k^\pi)^2$ diverges, we shall soon see that the bankruptcy of the mixture strategy is much easier to analyze directly via its native integration form
\begin{equation}
    W_n^\pi = \int W_n^\lambda \, \pi(\d \lambda) = \int \left\{\prod_{k=1}^n (1 + \lambda(X_k - m))  \right\} \pi(\d \lambda).
\end{equation}
Specifically, the only condition that determines if a mixture strategy is null-bankrupt is whether the mixture distribution $\pi$ has an atom at 0 (i.e.\ it charges the set $\{0\}$ with a positive probability $\pi(\{0\})>0$). Intuitively, if $\pi$ has an atom at 0, the mixture strategy always keeps some capital unwagered (``cash''), therefore it never goes bankrupt. The precise statement below is proved in \cref{sec:pf-nocash}.

\begin{theorem}[No-cash criterion]\label{thm:nocash} Let $P$ be a non-degenerate distribution on $[0,1]$ with mean $m$ and $(X_n)\iid P$.
Let $\pi$ be a probability measure  on $[-\frac{1}{1-m}, \frac{1}{m}]$. Then, the mixture wealth $(W_n^\pi)$ converges to $\pi(\{0\})$ almost surely. In particular, $W_\infty^\pi = 0$ almost surely if and only if $\pi$ does not have an atom at 0, i.e.\ $\pi(\{0\}) = 0$.
\end{theorem}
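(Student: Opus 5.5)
The natural route is to split the mixture along the atom at zero,
\begin{equation}
W_n^\pi = \pi(\{0\}) + \int_{\lambda \neq 0} W_n^\lambda \, \pi(\d\lambda),
\end{equation}
since $W_n^0 \equiv 1$. It then suffices to show that the second term $V_n := \int_{\lambda\neq 0} W_n^\lambda\,\pi(\d\lambda)$ converges to $0$ almost surely. First, $(V_n)$ is a nonnegative martingale under $P$: it is a mixture of the nonnegative martingales $W_n^\lambda$, and Fubini applies because everything is nonnegative. Hence $V_n \to V_\infty$ almost surely, and $\Exp_P V_\infty \le \pi(\{0\}^c)$. The goal is therefore $V_\infty = 0$ a.s., and the plan is to show $\Exp_P V_\infty = 0$.

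The key step is a pointwise statement for each fixed $\lambda \neq 0$: $W_n^\lambda \to 0$ almost surely. This follows directly from \cref{thm:sos-crit} with the constant predictable sequence $\lambda_n \equiv \lambda$, because $\sum_n \lambda^2 = \infty$. (Alternatively, strict Jensen gives $\Exp_P \log(1+\lambda(X_1-m)) < \log 1 = 0$ for non-degenerate $P$, and then the SLLN applies.)

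To pass from pointwise to mixture, I would use Fubini together with Fatou. Joint measurability of $(\lambda,\omega)\mapsto W_n^\lambda(\omega)$ is clear, so for $\pi$-a.e.\ $\lambda\neq 0$ we have $W_n^\lambda \to 0$ $P$-a.s. Fubini then gives, for $P$-a.e.\ $\omega$, $W_n^\lambda(\omega)\to 0$ for $\pi$-a.e.\ $\lambda \neq 0$. The difficulty is that pointwise convergence in $\lambda$ does not imply $\int W_n^\lambda\,\pi(\d\lambda) \to 0$, because there is no dominating function. This is the step I expect to be the main obstacle.

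My first attempt would be a truncation (uniform-integrability-type) argument in expectation. For $\varepsilon>0$, I would show $\Exp_P[\min(W_n^\lambda, M)] \to 0$ for each $\lambda$ by bounded convergence, and write
\begin{equation}
\Exp_P \min(V_n,1) \le \int_{\lambda\neq0} \Exp_P \min(W_n^\lambda,1)\,\pi(\d\lambda)
\end{equation}
using concavity and subadditivity of $x\mapsto \min(x,1)$ on mixtures. The subadditivity inequality $\min(\int f,1)\le \int \min(f,1)$ holds for a probability measure $\pi$ restricted to $\{\lambda \neq 0\}$ by Jensen, since $\min(\cdot,1)$ is concave and $\pi(\{0\}^c)\le 1$. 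The integrand is bounded by $1$ and tends to $0$ for each $\lambda\neq0$, so dominated convergence in $\lambda$ gives $\Exp_P\min(V_n,1)\to 0$. Thus $V_n \to 0$ in probability, and combined with the a.s.\ convergence of $V_n$ this forces $V_\infty = 0$ a.s. Therefore $W_n^\pi \to \pi(\{0\})$ almost surely.

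The ``in particular'' part is immediate: $W_\infty^\pi = \pi(\{0\})$ is deterministic, so it equals $0$ a.s.\ if and only if $\pi(\{0\}) = 0$.
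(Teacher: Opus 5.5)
Your reduction to $V_n:=\int_{\lambda\neq 0}W_n^\lambda\,\pi(\d\lambda)\to 0$ is correct, as are the a.s.\ convergence of the nonnegative martingale $(V_n)$ and the fixed-$\lambda$ fact $W_n^\lambda\to 0$. You also rightly flag the passage from fixed $\lambda$ to the mixture as the crux. \textbf{The gap is the inequality you use to cross it,} $\min(\int f\,\d\mu,1)\le\int\min(f,1)\,\d\mu$, which is false. Jensen for the concave function $\min(\cdot,1)$ gives exactly the reverse inequality. Subadditivity $\min(a+b,1)\le\min(a,1)+\min(b,1)$ holds for unweighted sums, not for averages or integrals against (sub-)probability measures. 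The inequality fails even inside the theorem's own setting and after taking $\Exp_P$. Take $m=1/2$, $P=\operatorname{bernoulli}(1/2)$ and $\pi=\frac12\delta_{2}+\frac12\delta_{-2}$. Then one of $W_1^{2},W_1^{-2}$ equals $2$ and the other $0$, so $V_1\equiv 1$ and $\Exp_P\min(V_1,1)=1$, whereas $\int\Exp_P\min(W_1^\lambda,1)\,\pi(\d\lambda)=1/2$. So you have not shown $V_n\to 0$ in probability, and nothing else in the proposal rules out $V_\infty>0$ on a set of positive probability. Pointwise convergence plus Fatou only give $\Exp_P V_\infty\le\pi(\{0\}^c)$.

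\textbf{What is missing is uniformity over $\lambda$,} which pointwise convergence cannot supply. The paper gets it from the common bound $W_n^\lambda\le\exp(\lambda S_n)$, where $S_n=\sum_{k\le n}(X_k-m)$. Since $\liminf S_n=-\infty$ a.s., there is a random subsequence with $S_{n_N}\downarrow-\infty$. Along it, every $\lambda>0$ component is bounded by $1$ and decreases to $0$ simultaneously, so monotone convergence gives $W^+_{n_N}\to 0$. The a.s.\ convergence of the martingale $W_n^+$ then identifies its limit as $0$, and the case $\lambda<0$ is symmetric.

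Alternatively, you can keep your structure and truncate near zero. The mixture over $\{0<|\lambda|<\epsilon\}$ is a nonnegative martingale with mean $\pi(\{0<|\lambda|<\epsilon\})$. On $\{\lambda\ge\epsilon\}$, concavity of $\lambda\mapsto\log W_n^\lambda$ together with $\log W_n^0=0$ gives $W_n^\lambda\le W_n^\epsilon$ whenever $W_n^\epsilon<1$. Hence that part is at most $W_n^\epsilon\to 0$ a.s., and likewise for $\lambda\le-\epsilon$. Fatou then gives $\Exp_P V_\infty\le\pi(\{0<|\lambda|<\epsilon\})$ for every $\epsilon>0$, so $V_\infty=0$ a.s.
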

That is, any mixture betting strategy converges almost surely to the fraction of the mixture assigned to $\lambda = 0$. For any mixture distribution $\pi$ on $[-\frac{1}{1-m}, \frac{1}{m}]$, we can decompose the $\pi$-mixture strategy into its ``cash component'' $\pi|_{\{0\}}$ and its ``bet component'' $\pi|_{[-\frac{1}{1-m},0)\cup(0, \frac{1}{m}] }$, with the former staying constant and the latter going to bankruptcy.

In particular, the two mixtures employed by \cite{orabona2023tight} are both continuous, thus atomless at 0, and are consequently null-bankrupt.
\begin{proposition}\label[proposition]{prop:mixture-bankrupt}
    The universal portfolio betting strategy proposed by \citet[Section 4]{orabona2023tight}, which corresponds to $W_n^\pi$ where $\pi$ is a Beta distribution rescaled to $[-1,1]$, and the Robbins' iterated logarithm betting strategy proposed by \citet[Section 5]{orabona2023tight}, which corresponds to $W_n^\pi$ where $\pi$ has density $f_\pi(\lambda)=\frac{\id_{\{ |\lambda| \le 1 \} } \log \log C}{2|\lambda| \log(C/|\lambda|) (\log\log(C/|\lambda|))^2 }$ where $C=6.6e$, both converge to 0 almost surely under any non-degenerate null distribution.
\end{proposition}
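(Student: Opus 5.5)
This follows directly from \cref{thm:nocash}. That theorem says that, under any non-degenerate null $P$ with mean $m$, the mixture wealth $W_n^\pi$ converges almost surely to $\pi(\{0\})$. So the whole proposition reduces to two checks. First, each of the two mixing measures is a probability measure supported inside $[-\frac{1}{1-m},\frac{1}{m}]$. Second, each assigns zero mass to the singleton $\{0\}$.

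\textbf{Support.} Both measures live on $[-1,1]$. Since $m\in[0,1]$, we have $\frac{1}{1-m}\ge 1$ and $\frac{1}{m}\ge 1$, so $[-1,1]\subseteq[-\frac{1}{1-m},\frac{1}{m}]$ for every $m$. Hence each $W_n^\pi$ is a legitimate mixture of the form \eqref{eqn:mixture-wealth}, and \cref{thm:nocash} applies.

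\textbf{Universal portfolio.} Here $\pi$ is the pushforward of a Beta distribution under an affine map onto $[-1,1]$. A Beta law is absolutely continuous with respect to Lebesgue measure, and an affine bijection preserves absolute continuity. Therefore $\pi(\{0\})=0$, and \cref{thm:nocash} gives $W_n^\pi\to 0$ almost surely.

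\textbf{Robbins' iterated-logarithm mixture.} Here $\pi$ is given by the density $f_\pi$. Any measure with a Lebesgue density is atomless, so $\pi(\{0\})=0$ automatically. This holds even though $f_\pi$ blows up at $\lambda=0$: integrability is all that matters. The only genuine check, which I expect to be the main (mild) obstacle, is that $f_\pi$ really is a probability density on $[-1,1]$.

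I would verify this by the substitution $u=\log\log(C/|\lambda|)$. For $0<\lambda\le 1$,
\[
\d u = -\frac{\d\lambda}{\lambda\log(C/\lambda)} .
\]
Using symmetry in $\lambda$,
\[
\int_{-1}^1 f_\pi = \log\log C \int_{\log\log C}^{\infty} u^{-2}\,\d u = 1 .
\]
This computation needs $C=6.6e>e$, so that $\log\log C>0$ and $\log\log(C/|\lambda|)$ is well defined and positive on $(0,1]$. It also shows that the apparent singularity at $0$ is integrable, so $\pi$ is a genuine probability measure.

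With both checks done, \cref{thm:nocash} yields $W^\pi_\infty=\pi(\{0\})=0$ almost surely for both strategies, under any non-degenerate null distribution.
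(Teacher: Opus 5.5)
Your proposal is correct and takes the same route as the paper: both mixing measures have a Lebesgue density and are therefore atomless at $0$, so \cref{thm:nocash} gives $W_n^\pi \to \pi(\{0\}) = 0$ almost surely. Your support check and the normalization computation for $f_\pi$ are not in the paper, but they are correct and harmless.
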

Note that Robbins' mixture achieves the iterated logarithmic rate via being ``heavy at 0'': its \emph{density} satisfies $f_\pi(0) = \infty$. However, the mixture measure still charges 0 mass to the point $\{ 0 \}$, thus null-bankrupt.

We now revisit the question asked at the end of \cref{sec:bankruptcy-predictable}. Among mixture strategies, we know from \cref{thm:nocash} that null-bankrupt strategies are exactly those without the cash component $\pi(\{0\})$. Given any null-non-bankrupt mixture strategy, its bet component $\pi' = \pi|_{[-\frac{1}{1-m},0)\cup(0, \frac{1}{m}] }$ yields a strictly more powerful,  null-bankrupt strategy as
\begin{equation}\label{eqn:remove-cash}
    W^{\pi'}_n = \frac{W_n^\pi - \pi(\{ 0 \})}{1-\pi(\{ 0 \})} > W_n^\pi
\end{equation}
eventually on every sample path where $W^\pi_n \to \infty$. In this sense, all good \emph{mixture} betting strategies must be cash-free and go bankrupt almost surely under the null.

\section{Do all good strategies go bankrupt?}\label{sec:all}

We demonstrated above that known ``good'' mixture betting strategies are null-bankrupt. We now explore this principle in greater generality: do \emph{all ``good'' strategies} go bankrupt?

Revisiting our earlier argument around \eqref{eqn:remove-cash}, given an original strategy $M_n^\pi$, we constructed another strategy $M_n^{\pi'}$ that  makes more money under the alternative, at the price of making less money (and possible bankruptcy) under the null; it \emph{improves}
 upon the original strategy $M_n^\pi$ in this sense.

Our key result in this section is that it is possible to generalize the cash-removal improvement \eqref{eqn:remove-cash} to \emph{all} strategies
on certain sample paths which we refer to as being ``predictably non-bankrupt''. Let $(W_n)$ be the wealth process of some strategy whose bet fraction process is $(\lambda_n)$. Define
\begin{equation}
    \widehat W_n = \min_{x \in [0,1] } W_{n-1}(1 + \lambda_n(x - m)) = \min\{ W_{n-1}(1 + \lambda_n(0 - m)),  W_{n-1}(1 + \lambda_n(1 - m))  \}.
\end{equation}
That is, $\widehat W_n$ is the minimum possible wealth at time $n$ conditioned on the information available up to time $n-1$. Therefore, it forms a predictable process. For $\rho > 0$, we define the \emph{$\rho$-predictably non-bankrupt} event as
\begin{equation}\label{eqn:pnb}
    N^{\rho} := \bigcap_{n=1}^\infty \{ \widehat W_n > \rho \}.
\end{equation}
The event $N^\rho$ says that it is always guaranteed that the next-round wealth cannot drop below $\rho$. It implies wealth never \emph{actually} drops below $\rho$, and is implied by bet fraction being always small enough:
\begin{equation}
    \{ |\lambda_n| < 1-\rho W_{n-1}^{-1} \} \subseteq \{ \widehat W_n > \rho \} \subseteq \{ W_n > \rho \}.
\end{equation}
Clearly, for mixture strategies with cash component $\pi(\{ 0 \}) \ge \rho$ and non-degenerate bet component, the event $N^\rho$ is the entire space. Our result below states that we can improve any strategy on the event $N^\rho$.



\begin{theorem}[Improvability on predictably non-bankrupt paths] \label[theorem]{cor:improve}
    Let $(W_n)$ be the wealth process of some betting strategy, $\rho \in (0,1)$, and $N^\rho$ be its $\rho$-predictably non-bankrupt event as defined in \eqref{eqn:pnb}. There exists another betting strategy whose wealth process $(W_n^\sharp)$ satisfies $W_n^\sharp = \frac{W_n - \rho}{1 - \rho}$ on the event $N^\rho$. Consequently, on the event $N^\rho$:
    \begin{gather}
     \{ W_n > 1 \} \subseteq \{ W_n^\sharp > W_n  \}, \quad \{ W_n \to \infty \} \subseteq \{\liminf W_n^\sharp / W_n = 1/(1-\rho) > 1  \}; \label{eqn:improve-alt} \\
    \{ W_n < 1 \} \subseteq \{ W_n^\sharp < W_n  \}, \quad \{ W_\infty < 1 \} \subseteq \{ W_\infty^\sharp =  (W_\infty - \rho)/(1-\rho) < W_\infty \}. \label{eqn:improve-null}
    \end{gather}
\end{theorem}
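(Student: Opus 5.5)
The plan is to build $W^\sharp$ explicitly as a predictable plug-in strategy. Its bet fraction rescales the original bet so that the dollar amount wagered is unchanged. Write $W_n = W_{n-1}(1+\lambda_n(X_n-m))$. For the target identity $W_n^\sharp = (W_n-\rho)/(1-\rho)$ to propagate, we need
\[
W_n^\sharp - W_{n-1}^\sharp = \frac{W_n - W_{n-1}}{1-\rho} = \frac{W_{n-1}\lambda_n (X_n-m)}{1-\rho}.
\]
Assuming inductively that $W_{n-1}^\sharp = (W_{n-1}-\rho)/(1-\rho)$, this requires
\[
\lambda_n^\sharp := \frac{\lambda_n W_{n-1}}{W_{n-1}-\rho}.
\]
In words, the new bettor holds the fixed cash amount $\rho/(1-\rho)$ aside and bets the same absolute stake with the remaining wealth. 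The definition needs $W_{n-1}>\rho$ and $\lambda_n^\sharp \in [-\frac{1}{1-m},\frac1m]$. So I would set $\lambda_n^\sharp$ by the formula above on the predictable event $E_n := \bigcap_{k\le n}\{\widehat W_k > \rho\}$, and $\lambda_n^\sharp := 0$ off $E_n$ (the bettor freezes once the guarantee fails). Since $E_n$ is $\sigma(X_1,\dots,X_{n-1})$-measurable, $(\lambda_n^\sharp)$ is predictable. Note also $W_0 = 1 > \rho$ and $W_0^\sharp=1=(1-\rho)/(1-\rho)$, which is the base case.

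The main step is checking admissibility of $\lambda_n^\sharp$ on $E_n$. This is exactly where $\widehat W_n > \rho$ is used. For $x\in\{0,1\}$ we have $W_{n-1}(1+\lambda_n(x-m)) > \rho$, so
\[
1+\lambda_n^\sharp(x-m) = \frac{W_{n-1}-\rho + W_{n-1}\lambda_n(x-m)}{W_{n-1}-\rho} > 0 .
\]
By linearity in $x$ this holds for all $x\in[0,1]$, which is equivalent to $\lambda_n^\sharp\in(-\frac{1}{1-m},\frac1m)$. The same computation, together with $\widehat W_n>\rho$ and $W_n\ge\widehat W_n$, gives $W_n > \rho$, which is what is needed for the next step. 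Induction on $n$ then gives $W_n^\sharp = (W_n-\rho)/(1-\rho)$ for all $n$ on $N^\rho = \bigcap_n E_n$. Off $N^\rho$ the process $W^\sharp$ is still a legitimate betting wealth (nonnegative martingale under the null), since it just stops betting.

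The consequences are elementary algebra on $N^\rho$. First, $W_n^\sharp - W_n = \rho(W_n-1)/(1-\rho)$, which yields both first inclusions in \eqref{eqn:improve-alt} and \eqref{eqn:improve-null}. Second, if $W_n\to\infty$, then $W_n^\sharp/W_n = (1-\rho/W_n)/(1-\rho)\to 1/(1-\rho)$. Third, by Theorem~\ref{thm:sos-crit} (or martingale convergence) $W_n$ converges a.s. to $W_\infty$, and the affine identity passes to the limit, giving $W_\infty^\sharp = (W_\infty-\rho)/(1-\rho)$, which is $<W_\infty$ exactly when $W_\infty<1$. The only delicate point I anticipate is bookkeeping of the predictability and of the stopping rule off $N^\rho$; the core inequality is the one-line admissibility check above.
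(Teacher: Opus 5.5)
Your proposal is correct and is essentially the paper's proof. The paper's $\rho$-opportunistic leverage uses exactly the fraction $W_{n-1}\lambda_n/(W_{n-1}-\rho)$, justified by the same check that $1+\beta(x-m)>0$ at $x\in\{0,1\}$; the only difference is that off the event the paper reverts to $\lambda_n$ using the single-step event $\{\widehat W_n>\rho\}$, whereas you freeze with the cumulative event, which is immaterial on $N^\rho$.

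One correction to your verbal gloss (the proof is unaffected): the new stake is $W_{n-1}\lambda_n/(1-\rho)$, i.e.\ the original stake scaled up by $1/(1-\rho)$, and the new bettor is borrowing $\rho/(1-\rho)$ in cash rather than holding it aside. Your displayed increment identity already says this.
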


To summarize \cref{cor:improve} in a nutshell: when the original strategy is predictably non-bankrupt, the improvement strategy makes more money under the alternative \eqref{eqn:improve-alt}, and loses more money under the null \eqref{eqn:improve-null}.
The intuition behind \cref{cor:improve} is the \emph{borrowing} or \emph{leveraging} 
nature of the cash-removal improvement \eqref{eqn:remove-cash} for mixture strategies: the cash-removed strategy $W_n^{\pi '}$ is equivalent to borrowing some cash and investing in a leveraged cash-holding strategy $W_n^\pi$. Analogously, the improvement strategy $W_n^{\sharp}$ in \cref{cor:improve} is equivalent to leveraging the original strategy $W_n$ on $N^\rho$. The full roadmap to developing these concepts of borrowing, the construction of $W_n^\sharp$, as well as the proof of \cref{cor:improve} can all be found in \cref{sec:pf-improve}.

As we admitted, the predictably non-bankrupt event $N^\rho$ is a subset of the actually non-bankrupt event $\bigcap_{n=1}^\infty \{ W_n > \rho \}$ (which must happen for some $\rho$ on a $W_\infty > 0$ path). Therefore, there is still a narrow gap between our result above and the general question ``do all good strategies go bankrupt''. We expect future work to close this gap.
We further our discussion on this intriguing question in \cref{sec:further-bankruptcy}, with examples and reasoning around (in fact, against) (1) whether exponentially powerful strategies are all null-bankrupt, (2) whether the Cram\'er-Rao bounds imply the necessary null-bankruptcy of some strategies, and (3) whether we can make the wealth lower bound $\rho$ a predictable sequence in \cref{cor:improve}.

\section{Odds and ends}

Our discussion on the null bankruptcy of testing-by-betting strategies naturally leads us to consider two adjacent topics that we have already hinted at. First, since these strategies are often compared against a hindsight optimum in regret analysis, it is natural to consider the null behavior of the hindsight optimum too. Second, bounded betting wealth processes are related to (and differ subtly from) subGuassian, or more generally, sub-$\psi$ testing processes, and we attempt to extend our techniques to the null bankruptcy analysis of these processes.

\subsection{Null asymptotics of $\klinf$}\label{sec:klinf}

Let $(W_n)$ be the wealth process of some betting strategy.
Much has been studied on the pathwise regret
\begin{equation}
    R_n = \max_{\lambda \in [-\frac{1}{1-m}, \frac{1}{m}]} (\log W_n^\lambda) - \log W_n.
\end{equation}
Having showed that $\log W_n \to -\infty$ on $P$-almost all paths for non-degenerate null $P$, and knowing that $R_n$ is usually $O(\log n)$ on \emph{all} sample paths from the online learning literature (e.g.\ KT \eqref{eqn:kt-lambda} in the binary case \citep{orabona2016coin}, the two mixtures mentioned in \cref{prop:mixture-bankrupt}),
we now investigate the behavior of
\begin{equation}
   L_n^* :=\max_{\lambda \in [-\frac{1}{1-m}, \frac{1}{m}]} (\log W_n^\lambda) = \max_{\lambda \in [-\frac{1}{1-m}, \frac{1}{m}]} \sum_{k=1}^n \log(1+\lambda(X_k - m)),
\end{equation}
the best-in-hindsight log-wealth, under the null.


It is worth noting that the quantity $L_n^*$ 
is better known as being related to the $\klinf$ statistic in the literature on multi-armed bandits. Many bandit methods \citep{thesis} are derived from controlling $\klinf(P_n, m)$
where $P_n$ is the empirical measure and $
    \klinf(P, m) = \min\{ \kl(P \| Q) : Q \text{ on }[0,1] \text{ with mean }m \}$.
Crucially, the $\klinf$ statistic defined via this minimization is shown by \cite{honda2010asymptotically} to have a dual representation that coincides with the hindsight maximum log-wealth, $n \klinf(P_n, m ) = L_n^*$.
We thus denote $\lKL_n = \argmax_{\lambda \in [-\frac{1}{1-m}, \frac{1}{m}]}  W_n^\lambda$, and so $L_n^* = W_n^{\lKL_n}$. Recalling the definition of GRAPA from \eqref{eqn:grapa-lambda}, we see that $\lgr_{n+1} = \lKL_n$.

An \emph{unconstrained} version of the hindsight maximum log-wealth, 
\begin{equation}
-\log \operatorname{EL}_n = \sup_{\lambda} \sum_{k=1}^n \log(1+\lambda(X_k - m))    ,
\end{equation}
where $\lambda$ can take any value as long as the logarithms are all defined, on the other hand, has been studied in the concept of the empirical likelihood by \cite{owen2001empirical} and the dual likelihood by \cite{mykland1995dual}. See also the discussion by \citet[Section E.6]{waudby2024estimating}. These authors show that the unconstrained supremum $-2\log \operatorname{EL}_n$ converges weakly to a $\chi^2_{(1)}$ limit. We note that as a well-behaved M-estimation procedure, adding the constraint $\lambda \in [-\frac{1}{1-m}, \frac{1}{m}]$ does not alter its asymptotic behavior, so the same $\chi^2_{(1)}$ limit applies to the constrained maximum $L_n^*$ as well. We prove this fact formally in \cref{sec:pf-chisq-klinf} using the Bahadur expansion of the GRAPA/$\klinf$ bet fractions $\lgr_{n+1} = \lKL_n$ in \cref{prop:grapa-bahadur}.

\begin{theorem}\label{thm:chisq-klinf}  Let $P$ be a non-degenerate distribution on $[0,1]$ with mean $m$ and $(X_n)\iid P$. Then, twice the hindsight maximum log-wealth
    \begin{equation}
  2  L^*_n =  2\max_{\lambda \in [-\frac{1}{1-m}, \frac{1}{m}]} \log W_n^\lambda = 2\sum_{k=1}^n \log(1 + \lKL_n (X_k - m) )
    \end{equation}
    converges weakly to a $\chi^2$ distribution with 1 degree of freedom. 
    Consequently, a null-bankrupt strategy must have unbounded regret on $P$-almost all paths:
    \begin{equation}\label{eqn:unbounded-regret}
        P(W_n \to 0)=1 \implies P( \sup(L_n^* - \log W_n)=\infty) = 1.
    \end{equation}
\end{theorem}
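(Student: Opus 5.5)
We follow the route indicated before the statement: a second-order Taylor expansion of $\lambda \mapsto \sum_{k=1}^n \log(1+\lambda(X_k-m))$ around $\lambda=0$, evaluated at $\lambda=\lKL_n = \lgr_{n+1}$, together with the Bahadur expansion of \cref{prop:grapa-bahadur}. Write $Y_k = X_k - m$, $S_n=\sum_{k\le n} Y_k$, and $V_n=\sum_{k\le n} Y_k^2$. The SLLN gives $V_n/n\to\sigma^2$ a.s.

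By \cref{prop:grapa-bahadur}, $\lKL_n = S_n/(n\sigma^2) + o_{a.s.}(n^{-3/4}\log n)$. Since $S_n = O_{a.s.}(\sqrt{n\log\log n})$ by the LIL, it follows that $\lKL_n\to 0$ a.s. Hence eventually $|\lKL_n Y_k|\le|\lKL_n|\le 1/2$ for all $k$, and the expansion $\log(1+x)=x-x^2/2+O(|x|^3)$ applies uniformly in $k$:
\begin{equation}
L_n^* = \lKL_n S_n - \tfrac12 (\lKL_n)^2 V_n + O\bigl(|\lKL_n|^3 n\bigr).
\end{equation}
The remainder is $O_p(n^{-3/2}\cdot n)=o_p(1)$. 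Substituting $\lKL_n = S_n/(n\sigma^2)+r_n$ with $\sqrt n\, r_n\to0$ a.s., we get $\lKL_n S_n = S_n^2/(n\sigma^2) + o_p(1)$, because $r_n S_n = (\sqrt n r_n)(S_n/\sqrt n)$ and $S_n/\sqrt n=O_p(1)$. Similarly $(\lKL_n)^2 V_n = S_n^2/(n\sigma^2)\cdot (V_n/(n\sigma^2)) + o_p(1) = S_n^2/(n\sigma^2)+o_p(1)$. Therefore $2L_n^* = S_n^2/(n\sigma^2)+o_p(1)$, and the CLT with Slutsky's lemma gives $2L_n^*\Rightarrow\chi^2_{(1)}$.

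An alternative that avoids bookkeeping the Bahadur remainder uses a sandwich. For the lower bound, evaluate at the feasible $\tilde\lambda = S_n/V_n$: eventually $x-x^2/2-|x|^3\le\log(1+x)$ gives $L_n^*\ge S_n^2/(2V_n) - o_p(1)$. For the upper bound, $\log(1+x)\le x - x^2/2 + x^3/3$ holds only for small $x$, which is why we need $\lKL_n\to0$ a.s.; we obtain it from the Bahadur expansion or from strict concavity of the limit objective, which has its unique maximizer at $0$. Maximizing the resulting quadratic then gives $L_n^*\le S_n^2/(2V_n)+o_p(1)$.

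For \eqref{eqn:unbounded-regret}, suppose $P(W_n\to0)=1$. Then $\log W_n\to-\infty$ a.s. Since $L_n^*\ge \log W_n^0 = 0$ for every $n$ (as $\lambda=0$ is feasible), we get $L_n^*-\log W_n\ge -\log W_n\to\infty$ a.s. So this part is immediate and does not even need the weak limit; the $\chi^2$ limit only shows that $L_n^*$ itself stays $O_p(1)$, so that the unbounded regret comes entirely from $\log W_n$.

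The main obstacle is justifying the Taylor expansion uniformly, that is, showing $\lKL_n\to0$ a.s. so that the boundary of $[-\frac1{1-m},\frac1m]$ and the singularity of the logarithm play no role. This is supplied by \cref{prop:grapa-bahadur}. Everything after that is Slutsky-type bookkeeping.
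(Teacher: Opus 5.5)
Your main route is correct and is essentially the paper's proof. Like the paper, you plug the Bahadur expansion of $\lKL_n=\lgr_{n+1}$ from \cref{prop:grapa-bahadur} into a second-order Taylor expansion of $\sum_k\log(1+\lKL_n(X_k-m))$, show that $2L_n^*-S_n^2/(n\sigma^2)$ vanishes, and conclude by the CLT and Slutsky; the paper tracks an almost-sure remainder $o_{a.s.}(n^{-1/4}\log^2 n)$, while your $o_p(1)$ bookkeeping is all the weak limit needs, and your derivation of the unbounded-regret implication from $L_n^*\ge \log W_n^0=0$ and $\log W_n\to-\infty$ is also correct, supplying the short argument the paper leaves implicit behind its ``consequently.''
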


The hindsight maximum wealth $\prod_{k=1}^n (1 + \lKL_n (X_k - m) )$, therefore, converges weakly to $\exp(Z^2/2)$ where $Z \sim \mathcal{N}(0,1)$. We remark that this distribution is named the ``standard critical log-chi-squared distribution'' by \citet[Proposition 5.7]{Wang2025ensm}, and has infinite expected value.
This is in contrast to the almost sure bankruptcy of the GRAPA wealth $\prod_{k=1}^n (1 + \lKL_{k-1} (X_k - m) )$. These two have similar forms but have significantly different asymptotic behaviors, which is unsurprising: the hindsight maximum wealth is not a martingale, whereas the GRAPA wealth is. The unboundedness (i.e.\ $\omega(1)$ lower bound) of the regret \eqref{eqn:unbounded-regret} on almost all null paths complements the $O(\log n)$ all-path upper bounds in the literature.

Finally, we remark that both $\operatorname{EL}_n$ and $\klinf$ defined above are nonparametric likelihood ratio (NPLR) statistics:
\begin{gather}
    \operatorname{EL}_n = \sup \left\{ \frac{L(Q)}{L(P_n)} : \text{$Q$ with support on $\operatorname{supp}P_n$ and mean $m$} \right\},
    \\
 {-L_n^*}= {- n \klinf(P_n, m)} = \sup  \left\{ \log \frac{L(Q)}{L(P_n)} : \text{$Q$ with support on $[0,1]$ and mean $m$} \right\},
\end{gather}
where $L(\cdot)$ denotes the nonparametric likelihood, maximized globally by the empirical measure $P_n$. See e.g.\ \cite{gaffke2005three} for $\klinf$'s NPLR representation. Therefore, the asymptotic $\chi^2$ limit of both $-2\log \operatorname{EL}_n$ and $2L_n^*$ generalize the classical theorem of \cite{wilks1938large}.

\subsection{Null bankruptcy in subGaussian and sub-$\psi$ testing}\label{sec:crit-subg}

Counterparts of the divergence criteria \cref{thm:sos-crit,thm:nocash} can also be established for the plug-in and mixture strategies based on the subGaussian test martingale
\begin{equation}\label{eqn:subG-main}
    M^\lambda_n = \exp\left(\sum_{k=1}^n \frac{(X_k - m)^2 - (X_k - \lambda)^2}{2} \right),
\end{equation}
for which the online learning perspective is recently established by \cite{agrawal2025eventually}. See \cref{sec:related} for a short introduction.

\begin{theorem}[Sum-of-squares criterion II]\label{thm:sos-crit-subg}
 Let $P$ be a non-degenerate 1-subGaussian distribution with mean $m$ and $(X_n)\iid P$.
    Let $\blamb = (\lambda_n)$ be a predictable process taking values in $\mathbb R$.
    Then, the $\blamb$-plugin test process
    \begin{equation}
        M_n^{\blamb}= \exp\left(\sum_{k=1}^n \frac{(X_k - m)^2 - (X_k - \lambda_k)^2}{2} \right)
    \end{equation}
    converges almost surely to a random variable $M_\infty$ satisfying
    $$ \{M_\infty = 0 \} = \left\{ \sum_{n=1}^\infty (\lambda_n-m)^2 = \infty \right\},
    \text{\; and consequently \;} \{ M_\infty > 0 \} = \left\{ \sum_{n=1}^\infty (\lambda_n-m)^2 < \infty \right\}.$$
\end{theorem}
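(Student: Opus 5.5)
Expanding the exponent turns the problem into a martingale question. With $Y_k := X_k - m$ and $\theta_k := \lambda_k - m$ (predictable), we have $(X_k-m)^2 - (X_k-\lambda_k)^2 = 2\theta_k Y_k - \theta_k^2$, so
\begin{equation*}
\log M_n^{\blamb} = \sum_{k=1}^n \theta_k Y_k - \frac12 \sum_{k=1}^n \theta_k^2 =: S_n - \frac12 V_n .
\end{equation*}
Here $S_n$ is a martingale transform of the i.i.d.\ mean-zero, non-degenerate, 1-subGaussian sequence $(Y_k)$. Its conditional variance increments are $\theta_k^2\sigma^2$, where $\sigma^2 = \Var(Y_1) \in (0,1]$ by non-degeneracy and subGaussianity. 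So the predictable quadratic variation of $S$ is $\langle S\rangle_n = \sigma^2 V_n$. Unlike \cref{thm:sos-crit}, no Taylor approximation of a logarithm is needed: the log-process is exactly the martingale minus a multiple of its compensator.

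Since $M_n^{\blamb}$ is a nonnegative (super)martingale, it converges almost surely to some $M_\infty \in [0,\infty)$. This follows because $\Exp[e^{\theta_k Y_k - \theta_k^2/2}\mid\mathcal F_{k-1}] \le 1$ by the subGaussian assumption. We then split along the event $\{V_\infty < \infty\}$.

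\textbf{Case $V_\infty<\infty$.} On this event $\langle S\rangle_\infty < \infty$, so $S_n$ converges almost surely to a finite limit. This is the standard fact that a square-integrable martingale (localized by stopping times $\tau_c = \inf\{n : V_{n+1} > c\}$, which is legitimate since $V$ is predictable) converges on $\{\langle S\rangle_\infty<\infty\}$. Hence $\log M_n^{\blamb}$ converges to a finite number, and $M_\infty>0$. The finite second moment of $Y$ from subGaussianity is all this step requires.

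\textbf{Case $V_\infty=\infty$.} This is the main step. We need $S_n/V_n \to 0$ almost surely on $\{V_\infty = \infty\}$, since then $\log M_n^{\blamb} = V_n(S_n/V_n - 1/2) \to -\infty$. This is the martingale strong law on the divergence event: $S_n/\langle S\rangle_n \to 0$ on $\{\langle S\rangle_\infty = \infty\}$. One proves it by applying the convergence step to the martingale $\sum_k \theta_k Y_k/(1+\sigma^2 V_k)$, whose quadratic variation $\sum_k \sigma^2\theta_k^2/(1+\sigma^2V_k)^2$ is bounded by $\int dx/(1+x)^2 < \infty$, and then using Kronecker's lemma (see \cite{hall2014martingale,Fitzsimmonslecturenotes}). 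Unbounded $\theta_k$ causes no difficulty, because the increments are conditionally $L^2$ with conditional variance $\sigma^2\theta_k^2$, exactly as needed.

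Combining the two cases shows that, up to null sets, $\{M_\infty = 0\} = \{V_\infty = \infty\} = \{\sum_n(\lambda_n-m)^2 = \infty\}$. The complementary statement follows immediately.
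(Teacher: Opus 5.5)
Your proposal is correct and follows essentially the same route as the paper: write $\log M_n^{\blamb}$ as the martingale $\sum_k(\lambda_k-m)(X_k-m)$ minus half of $\sum_k(\lambda_k-m)^2$, then apply martingale convergence on $\{\sum_n(\lambda_n-m)^2<\infty\}$ and the martingale strong law of large numbers on its complement. Your localization via $\tau_c$, which covers predictable $\lambda_k$ that need not be square-integrable, and your Kronecker-lemma derivation of the strong law only make explicit what the paper gets by citing \cite{hall2014martingale} and \cite{Fitzsimmonslecturenotes}.
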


\begin{theorem}[No-cash criterion II]\label{thm:nocash-subg}  Let $P$ be a non-degenerate 1-subGaussian distribution with mean $m$ and $(X_n)\iid P$.  Let $\pi$ be a probability measure  on $\mathbb R$.
    Then, the $\pi$-mixture test process
    \begin{equation}
        M_n^{\pi}=\int \exp\left(\sum_{k=1}^n \frac{(X_k - m)^2 - (X_k - \lambda)^2}{2} \right) \pi(\d \lambda)
    \end{equation}
    converges almost surely to $\pi(\{m\})$.
\end{theorem}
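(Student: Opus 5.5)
The plan is to split $\pi$ into its atom at $m$ and the remainder, $\pi = \pi(\{m\})\delta_m + \pi'$ with $\pi'(\{m\})=0$. For $\lambda = m$ the integrand equals $1$ identically, so
\[
M_n^\pi = \pi(\{m\}) + \int_{\mathbb R\setminus\{m\}} M_n^\lambda \,\pi(\d\lambda).
\]
It therefore suffices to show that $\int_{\lambda\neq m} M_n^\lambda\,\pi(\d\lambda)\to 0$ almost surely.

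For each fixed $\lambda \ne m$, set $Y_k = X_k - m$. Then
\[
\log M_n^\lambda = (\lambda-m)\sum_{k=1}^n Y_k - \frac{n(\lambda-m)^2}{2}.
\]
By the strong law of large numbers this is $n\bigl(o(1) - (\lambda-m)^2/2\bigr)\to-\infty$, so $M_n^\lambda\to0$ almost surely; this is also the constant-sequence case of \cref{thm:sos-crit-subg}. To pass from pointwise convergence to convergence of the integral, I will not use dominated convergence in $n$ along a fixed path, since the integrand is not uniformly bounded in $\lambda$. Instead I will use a Fubini/Fatou argument.

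The mixture part $U_n := \int_{\lambda\ne m} M_n^\lambda\,\pi(\d\lambda)$ is a nonnegative supermartingale. Indeed, each $M_n^\lambda$ is a nonnegative supermartingale under a 1-subGaussian null, because
\[
\Exp e^{(\lambda-m)Y - (\lambda-m)^2/2}\le 1,
\]
and Tonelli's theorem carries this over to the integral. Hence $U_n \to U_\infty$ almost surely. By Fatou's lemma and Tonelli,
\[
\Exp U_\infty \le \liminf_n \Exp U_n \le \liminf_n \int_{\lambda\ne m} \Exp M_n^\lambda\,\pi(\d\lambda).
\]
It then suffices to show that $\Exp M_n^\lambda \to 0$ for each $\lambda\ne m$, after which dominated convergence in $\lambda$ (with bound $\Exp M_n^\lambda\le 1$) gives $\Exp U_\infty = 0$, hence $U_\infty=0$ almost surely.

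Showing $\Exp M_n^\lambda\to0$ is the main obstacle. A nonnegative supermartingale converging to $0$ almost surely need not have expectations tending to zero: in the exactly Gaussian case $\Exp M_n^\lambda = 1$ for all $n$, so this route fails. I will therefore use a square-root trick instead. Replace $U_n$ by $V_n := \int_{\lambda\ne m} (M_n^\lambda)^{1/2}\,\pi(\d\lambda)$. By Jensen or Cauchy–Schwarz, $U_n \le \pi'(\mathbb R)^{1/2}\bigl(\int (M_n^\lambda)\ldots\bigr)$, which goes the wrong direction, so I need to control $U_n$ directly. The better plan is to use $\sqrt{U_n}$, which is a nonnegative supermartingale with limit $\sqrt{U_\infty}$, together with the bound
\[
\Exp\sqrt{U_n}\le \Bigl(\int \Exp M_n^\lambda\ldots\Bigr)^{1/2}.
\]
This still faces the same issue, so I will instead argue pathwise with a truncation. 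Fix $\varepsilon>0$ and choose $\eta>0$ with $\pi(\{0<|\lambda-m|<\eta\})<\varepsilon$. On $\{|\lambda-m|\ge\eta\}$, the SLLN gives $|\bar Y_n|<\eta/4$ eventually, and then for all such $\lambda$ simultaneously,
\[
\log M_n^\lambda \le n|\lambda-m|\bigl(\eta/4 - |\lambda-m|/2\bigr)\le -n\eta^2/4,
\]
so that part of the integral tends to $0$ uniformly. On the small band $\{0<|\lambda-m|<\eta\}$, apply the Fatou–Tonelli bound to the band-restricted supermartingale: its limit has expectation at most $\pi(\text{band})<\varepsilon$. Letting $\varepsilon\downarrow0$ along a countable sequence yields $\Exp U_\infty\le\varepsilon$ for every $\varepsilon$, hence $U_\infty=0$ almost surely, and so $M_n^\pi\to\pi(\{m\})$ almost surely. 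The key step to verify carefully is the uniform-in-$\lambda$ exponential decay away from $m$; it follows from the SLLN alone, with no moment conditions beyond those implied by subGaussianity.
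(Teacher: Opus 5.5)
Your final truncation argument is correct and complete, but it takes a different route from the paper. Write $S_n=\sum_{k\le n}(X_k-m)$. The paper splits $\pi$ by the \emph{sign} of $\lambda-m$ and discards the compensator via $\log M_n^\lambda\le(\lambda-m)S_n$. It then uses the oscillation of the random walk: along a random subsequence with $S_{n_N}\downarrow-\infty$, monotone convergence (\cref{lem:exp-regular}) gives $\liminf_n M_n^+=0$, and supermartingale convergence upgrades this to $M_n^+\to0$; $M_n^-$ is handled the same way via $\limsup S_n=+\infty$. You instead split by \emph{distance} to $m$ and keep the compensator $-n(\lambda-m)^2/2$. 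The SLLN then gives the uniform bound $e^{-n\eta^2/4}$ on $\{|\lambda-m|\ge\eta\}$. On the band $B_\eta=\{0<|\lambda-m|<\eta\}$ no uniform pathwise bound is available: eventually $\sup_{\lambda\in B_\eta}\log M_n^\lambda=S_n^2/(2n)$, which is unbounded by the LIL. You therefore control the band only in expectation, through Tonelli, the supermartingale property and Fatou, which gives $\Exp U_\infty\le\pi(B_\eta)$. Your route needs only the SLLN and never uses non-degeneracy of $P$; the conclusion also holds for $P=\delta_m$, where $\liminf S_n=-\infty$ fails. It also gives an explicit exponential rate away from $m$. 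The paper's route uses only the crude bound $\log M_n^\lambda\le(\lambda-m)S_n$, which is equally available for bounded betting wealth via $1+x\le e^x$, so a single argument proves both \cref{thm:nocash} and this theorem. In a write-up, cut the abandoned detours (the attempt to show $\Exp M_n^\lambda\to0$ and the unfinished square-root/Cauchy--Schwarz displays); they are unused and contain incomplete expressions.
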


Both theorems above are proved in \cref{sec:pf-subg}.
Finally, there is a generalization of the subGaussian mean testing martingale \eqref{eqn:subg-fixed} for the general \emph{sub-$\psi$} random variables. See e.g.\ \cite{howard2020time,howard2021time} for an introduction. In the sub-$\psi$ case, the process \eqref{eqn:subG-main} (taking $m=0$ for simplicity)
\begin{equation}
         M_n^{\blamb} = \exp\left\{ \sum_{k=1}^n \left(\lambda X_k - \frac{1}{2}\lambda^2 \right)  \right\} \quad \text{becomes} \quad
    \exp\left\{    \sum_{k=1}^n \left( \lambda X_k -\psi(\lambda)\right)\right\}
\end{equation}
where $\psi(\cdot)$ is a function that locally behaves like $\psi(x) \approx \frac{x^2}{2}$ for $x\approx 0$. Therefore, one may prove similar sum-of-squares and no cash criteria for these testing strategies. We omit these straightforward extensions from our paper.

\section{Conclusion}

Many successful betting strategies for the bounded mean testing problem converge almost surely to zero wealth under all non-degenerate null distributions, and we provided some preliminary insight that this principle may apply more broadly to all betting strategies that satisfy some growth condition.
We also discussed the null asymptotic $\chi^2$ distribution of the hindsight maximum wealth ($\klinf$); and presented the analogous bankruptcy results for the unbounded (sub-$\psi$) test martingales.
Our results are complementary to numerous results on the regret, null maximal concentration, and confidence sets corresponding to these strategies.

Our work also posed the natural question ``do all \emph{good} strategies go bankrupt?'' to which we answered by introducing predictably non-bankrupt paths and their improvability. Since predictable non-bankruptcy is stronger than bankruptcy, a gap remains between our answer and a fully satisfactory one to the question. We expect future work to close this gap.


\subsubsection*{Acknowledgments} AR was funded by NSF grant DMS-2310718.

\bibliography{BibTex}

\newpage
\appendix
\section{Additional related work}\label[appendix]{sec:related}

Our research on the null bankruptcy of bounded mean betting wealth processes is also related to the following topics.

\paragraph{SubGaussian and sub-$\psi$ mean testing.} Many of the betting strategies/wealth processes we mentioned in \cref{sec:intro} have counterparts in the (equally) classic Gaussian, subGaussian \citep{robbins1970statistical,robbins1968iterated}, and sub-$\psi$ \citep{howard2020time,howard2021time} mean testing literature.
Let $(X_n)\iid P$ where $P$ is a subGaussian distribution on $\mathbb R$ with variance factor 1 and unknown mean $\mu(P)$. That is, $\Exp_P \exp( t (X_1 - \mu(P)) ) \le \exp(t^2/2)$ for all $t \in \mathbb R$. The analogy of ``fixed fraction bet'' \eqref{eqn:bet-fixed} testing the null $\mu(P) = m$ is the likelihood ratio martingale
\begin{equation}\label{eqn:subg-fixed}
    M^\lambda_n = \prod_{k=1}^n \frac{p(X_k; \lambda, 1 )}{p(X_k; m, 1)} = \exp\left(\sum_{k=1}^n \frac{(X_k - m)^2 - (X_k - \lambda)^2}{2} \right),
\end{equation}
where $p(\cdot; \mu,\sigma^2 )$ is the probability density function of $\mathcal{N}(\mu,\sigma^2)$.
Predictable plugin 
\begin{equation}
    \exp\left(\sum_{k=1}^n \frac{(X_k - m)^2 - (X_k - \lambda_k)^2}{2} \right)
\end{equation}
or mixture
\begin{equation}
  \int  \exp\left(\sum_{k=1}^n \frac{(X_k - m)^2 - (X_k - \lambda)^2}{2} \right) \pi(\d \lambda)
\end{equation}
test processes that achieve universal power compared to the constant $\lambda$ are similarly available. More generally, we say $P$ is sub-$\psi$ with variance factor 1 if $\Exp_P \exp( t (X_1 - \mu(P)) ) \le \exp(\psi(t))$ for all $t \in [0, t_{\max})$, where $\psi$ is usually a function satisfying $\psi(t) \approx t^2/2$ when $t \to 0$ \citep{howard2020time}. The fixed-fraction test martingale testing the null $\mu(P) = m$ is now
\begin{equation}\label{eqn:subpsi-fixed}
    \exp\left\{    \sum_{k=1}^n \left( (\lambda - m) (X_k - m) -\psi(\lambda - m)\right)\right\}.
\end{equation}
When $\psi(t) = t^2/2$, \eqref{eqn:subpsi-fixed} equals \eqref{eqn:subg-fixed}. Predictable plug-in and mixture are similarly available.
In \cref{sec:crit-subg}, we discuss the null bankruptcy of these processes too.

\paragraph{All vs.\ $P$-almost all vs.\ $P$-most paths; always vs.\ eventually valid.} We have mentioned a line of work  either belonging to or inspired by the online learning and portfolio selection literature, notably that of \cite{cover1991universal,cover2002universal,orabona2016coin,orabona2023tight}. Indeed, the betting process \eqref{eqn:predictable} is equivalent to the online prediction game with logarithmic loss, with the total accumulated loss being $-\log W_n^{\blamb}$. These authors usually prove regret bounds that characterize the \emph{always-valid largeness} of $(W_n)$ on \emph{all} sample paths. The sequential inference literature \citep{howard2020time,waudby2024estimating}, on the other hand, draws heavily on the standard Ville's inequality which characterizes the \emph{always-valid smallness} of $(W_n)$ on \emph{$P$-most} sample paths: sample paths where $\sup(W_n) \le 1/\alpha$ is of $P$ measure at most $1-\alpha$.
Recently, \cite{agrawal2025eventually} deliver some very novel findings on the regret bounds that apply to $P$-most paths, not for the bounded betting game but for the \emph{subGaussian} game \eqref{eqn:subg-fixed}. They reason that in the subGaussian regime with unbounded observations, (always-valid) regret bounds may only apply to $P$-most paths.
We, on the other hand, primarily focus on the \emph{eventual smallness} of $(W_n)$ on \emph{$P$-almost all} sample paths. These sample paths constitute a larger set than a $P$-most set, but a proper subset of all paths. \cite{agrawal2025eventually} also notice that \emph{eventual largeness} statements (asymptotic regret bounds) can be proved on a $P$-almost all set of paths.

\paragraph{Other work on null-bankrupt test processes.} In the sequential statistics literature, many authors have proposed test processes that are nonnegative martingales, supermartingales, or e-processes under the null hypothesis, and mentioned along the way that these processes converge to 0 under (some) null distributions. These include \citet[Section 4.1]{ramdas2022testing} in the context of testing exchangeable bits, \citet[Table 3]{wang2025anytime} in the context of mixture-based t-tests and Z-tests. A sufficient condition for martingale bankruptcy is proposed by \citet[Lemma 33]{ramdas2020admissible}, 
which we discussed
when presenting a more useful necessary and sufficient condition \cref{thm:sos-crit}. Finally, it is noted by \cite{grunwald2023posterior} that test processes generalize the Bayesian prior-posterior ratio, and the null bankruptcy is therefore analogous to the concentration of posterior distribution towards the point mass on the ground truth, a concept visualized in passing in the bounded betting case by \citet[Figure 18]{waudby2024estimating}.

\section{Omitted and additional proofs}

\subsection{Proof of \cref{thm:sos-crit} (sum-of-squares criterion)}\label[appendix]{sec:pf-sos-crit}

\begin{proof}
    Consider the process $S_n= \sum_{k=1}^n \lambda_k(X_k - m)$, a square-integrable martingale with quadratic variation $ \langle S\rangle_n = \sigma^2 \sum_{k=1}^n \lambda_k^2$ where $\sigma^2 = \Var X_1 > 0$. 

    First, on the event $\{\sum_{n=1}^\infty \lambda_n^2 = \infty \} =  \{\langle S \rangle_\infty = \infty \}$, since $\Exp(\sup_n (S_n- S_{n-1})^2 ) \le \max( m^2/(1-m)^2, (1-m)^2/m^2 ) < \infty$, by \citet[Theorem 2(b)]{Fitzsimmonslecturenotes}, $S_n$ diverges almost surely. Further, on the event $\{S_n \text{ diverges} \}$, since $\Exp(\sup_n |S_n- S_{n-1}| ) < \infty$, we learn from \citet[Theorem 2.14]{hall2014martingale} that $\liminf S_n = -\infty$ almost surely. Because $W_n^{\blamb} \le \exp(S_n)$, this further implies that $\liminf W_n^{\blamb} = 0$ a.s.\ on this event. Since $W_n$ is a nonnegative martingale, it converges a.s.\ on the entire probability space to some $W_\infty$, so we can take $W_\infty$ such that $W_\infty = \lim W_n^{\blamb} = \liminf W_n^{\blamb} = 0$ on $\{\sum_{n=1}^\infty \lambda_n^2 = \infty \}$.

    Second, on the event $\left\{ \sum_{n=1}^\infty \lambda_n^2 < \infty \right\} \cap \bigcap_{n=1}^\infty \{ \lambda_n(X_n - m) > -1 \}$, there exists a random finite sample size $N$ such that when $n > N$, $|\lambda_n| \le 1/2$. Using the inequality $\log(1+x) \ge x - x^2 $ for $|x|\le 1/2$:
    \begin{equation}\label{eqn:three-parts-converge}
        \log W_n^{\blamb} \ge \sum_{k=1}^N \log( 1 + \lambda_k(X_k - m)) +\sum_{k= N+1}^n \lambda_k(X_k - \mu) - \sum_{k= N+1}^n \lambda_k^2(X_k - m)^2.
    \end{equation}
    First, the standard martingale convergence result from \citet[Theorem 2.15]{hall2014martingale} states that $S_n$ converges a.s.\ to a finite random variable on the event $\{ \sum_{n=1}^\infty \lambda_n^2 < \infty \} = \{ \langle S\rangle_\infty < \infty \}$; $\sum_{k=1}^n \lambda_k^2(X_k - m)^2$ also converges on this event as $| \lambda_k^2(X_k - m)^2| \le \lambda_k^2$. These indicate that the second and third terms in \eqref{eqn:three-parts-converge} both converge.
    The events $\{ \lambda_n(X_n - m) > -1 \}$ further ensure the first term in \eqref{eqn:three-parts-converge} is finite.
    Therefore, the right hand side of \eqref{eqn:three-parts-converge} converges a.s.\ to a finite variable on the event $\left\{ \sum_{n=1}^\infty \lambda_n^2 < \infty \right\} \cap \bigcap_{n=1}^\infty \{ \lambda_n(X_n - m) > -1 \}$. This implies we can ensure $W_\infty > 0$ on this event.

    Finally, on any event $\{ \lambda_n(X_n - m) = -1 \}$, it is clear that $W_k^{\blamb} = 0$ for all $k \ge n$.
\end{proof}

\subsection{Proof of \cref{thm:sum-of-op} (almost sure divergence of $\sum \Omega_p(n^{-1})$)}\label[appendix]{sec:pf-sum-of-op}

\begin{proof} Take any $\varepsilon \in (0,0.5)$.
    The condition $Z_n = \Omega_p(n^{-1})$ implies that there exist $\delta > 0$ and $N \ge 1$ such that $\Pr( n Z_n \ge \delta ) \ge 1-\varepsilon$ for all $n \ge N$.
    Consider the events $A_n = \{ nZ_n \ge \delta \}$ and the convergence event $C = \{ \sum_{n=1}^\infty Z_n < \infty \}$. We have,
    \begin{equation}
        Z_n \ge Z_n \id_{A_n} \ge \frac{\delta \id_{A_n}}{n}.
    \end{equation}
    Therefore,
    \begin{align}
        C &\subseteq \left\{ \sum_{n=1}^\infty \frac{\delta \id_{A_n}}{n} < \infty \right\} = \left\{ \sum_{n=N+1}^\infty \frac{\id_{A_n}}{n} < \infty \right\} \label{eqn:incl1} \\
        &\stackrel{(*)}\subseteq \left\{ \lim_{n \to \infty} \frac{1}{n}\sum_{k=N+1}^n \id_{A_k} = 0 \right\} =\left\{ \lim_{n \to \infty} \frac{1}{n-N}\sum_{k=N+1}^n \id_{A_k} = 0 \right\}. \label{eqn:incl2}
    \end{align}
    where the inclusion $(*)$ is due to Kronecker's lemma. Now, recalling that $\liminf E_n$ is the event that all but finitely many of events among $(E_n)$ happen simultaneously, and noting that $\lim a_n = 0$ implies that $a_n < 0.5$ for all but finitely many $n$,
    \begin{equation}
        \left\{ \lim_{n \to \infty} \frac{1}{n-N}\sum_{k=N+1}^n \id_{A_k} = 0 \right\} \subseteq \liminf_{n \to \infty} \left\{  \frac{1}{n-N} \sum_{k=N+1}^n \id_{A_k} < 0.5 \right\}.
    \end{equation}
    Next, noting that the random variable $G_n := \frac{1}{n-N} \sum_{k=N+1}^n \id_{A_k} \in [0,1]$ has expected value $\Exp G_n = \frac{1}{n-N}\sum_{k=N+1}^n \Pr(A_k) \ge 1-\varepsilon$, Markov's inequality implies
    \begin{equation}
        \Pr\left(  \frac{1}{n-N} \sum_{k=N+1}^n \id_{A_k} < 0.5 \right) = \Pr(1-G_n \ge 0.5) \le \frac{\Exp (1-G_n)}{0.5} \le 2\varepsilon.
    \end{equation}
    Combining everything above, we have
    \begin{equation}
        \Pr(C) \le \Pr \left(  \liminf_{n \to \infty} \left\{  \frac{1}{n-N} \sum_{k=N+1}^n \id_{A_k} < 0.5 \right\} \right) \stackrel{(**)}{\le} \liminf_{n\to\infty}  \Pr\left(  \frac{1}{n-N} \sum_{k=N+1}^n \id_{A_k} < 0.5 \right) \le 2\varepsilon
    \end{equation}
    where we applied Fatou's lemma to obtain the inequality $(**)$. Since $\varepsilon \in (0,0.5)$ is arbitrary, we see that $\Pr(C)=0$.
\end{proof}

\subsection{Divergence of $\sum S_n^2/n^2$ via Donsker's invariance principle}\label[appendix]{sec:donsker}

Below is an alternative proof of \cref{lem:div-sample-mean}.
\begin{proof} Assume $\sigma^2 = 1$ without loss of generality.
Let us study
\begin{equation}
    T_n = S_1^2 + \dots + S_n^2.
\end{equation}
By Donsker's invariance principle, as random variables in the Skorokhod space $\mathcal{D}[0,1]$, the functions
\begin{equation}
 B^{(n)}(t) :=  n^{-1/2} S_{\lfloor nt \rfloor}, \quad t \in[0,1]
\end{equation}
converge weakly to the standard Wiener process $B(t)$. Now consider the following function $F$ from the Skorokhod space $\mathcal{D}[0,1]$ to $\mathbb R$:
\begin{equation}
    F(f) = \int_0^1 f(t)^2 d t.
\end{equation}
We have
\begin{equation}
    F(B^{(n)}) = \int_0^1 n^{-1} S_{\lfloor nt \rfloor}^2 dt = n^{-1} \sum_{k=1}^n n^{-1} S_k^2 = n^{-2}T_n
\end{equation}
and
\begin{equation}
    F(B) =  \int_0^1 B^2(t) d t =: B^*.
\end{equation}
$F$ is a continuous function from the Skorokhod space to real numbers, a technical fact which we discuss later.
Therefore, the continuous mapping theorem implies that
\begin{equation}
    n^{-2} T_n \stackrel{\text{weakly}}{\longrightarrow} B^*,
\end{equation}
a random variable with a continuous distribution on $(0,\infty)$. For any $\delta > 0$, let $w_{\delta} > 0$ be the $\delta$-quantile of $B^*$. That is, $P( B^* \le w_\delta ) = \delta$. By Fatou's lemma,
\begin{equation}
    \Pr( \liminf\{ n^{-2} T_n \le w_\delta \} ) \le \liminf \Pr( n^{-2} T_n \le w_\delta   ) = \Pr( B^* \le w_\delta ) = \delta,
\end{equation}
where we recall that $\liminf A_n$ is the event that all but finitely many of events among $(A_n)$ happen simultaneously. 
Noting that $ \lim_{n \to \infty} n^{-2} T_n = 0$ would imply $ n^{-2} T_n \le w_\delta$ for all but finitely many $n$:
\begin{equation}
    \Pr( \lim n^{-2} T_n = 0 ) \le  \Pr(\liminf\{ n^{-2} T_n \le w_\delta \} ) \le \delta,
\end{equation}
and the arbitrariness of $\delta$ implies that $\Pr( \lim n^{-2} T_n = 0 ) = 0$.

Finally, by Kronecker's lemma, $\sum_{n=1}^\infty n^{-2} S_n^2 < \infty$ implies $n^{-2} T_n = n^{-2}\sum_{k=1}^n S_k^2 \to 0$. Since the latter happens with probability 0,
we see that $\sum_{n=1}^\infty n^{-2} S_n^2 = \infty$ almost surely. This completes the proof.
\end{proof}

Several remarks on this proof are as follows. First, the Skorokhod continuity of $F$ is proved below as \cref{lem:skrokhod-continuous}. Second, (yet) an alternative proof strategy that avoids the Skorokhod topology is to consider an appropriate ``linear interpolation'' of $B^{(n)}$, which converges to $B$ in the space of continuous functions (where the continuity of $F$ is straightforward).

\begin{lemma}\label[lemma]{lem:skrokhod-continuous}
    Let $\cD[0,1]$ be the space of c\`adl\`ag functions on $[0,1]$ equipped with the Skorokhod $J_1$ topology. Consider the square-integral functional $F:\cD[0,1]\to [0,\infty)$,
    \begin{equation}
        F(f) = \int_0^1 f^2(x) dx.
    \end{equation}
    Then, $F$ is continuous with respect to the standard topology on $[0,\infty)$ and the Skorokhod $J_1$ topology on $\cD[0,1]$.
\end{lemma}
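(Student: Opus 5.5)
To prove \cref{lem:skrokhod-continuous}, I will use the sequential characterization of $J_1$ convergence. Suppose $f_n \to f$ in $\cD[0,1]$. Then there exist strictly increasing continuous bijections $\phi_n:[0,1]\to[0,1]$ such that
\[
\sup_t |\phi_n(t)-t| \to 0 \quad\text{and}\quad \sup_t |f_n(\phi_n(t)) - f(t)| \to 0 .
\]
The goal is to show $\int_0^1 f_n^2 \to \int_0^1 f^2$, and I will split the difference into two pieces:
\[
\left|\int_0^1 f_n^2 - \int_0^1 f^2\right| \le \int_0^1 \bigl|f_n^2(x) - f^2(\phi_n^{-1}(x))\bigr|\,dx + \left|\int_0^1 f^2(\phi_n^{-1}(x))\,dx - \int_0^1 f^2(x)\,dx\right|.
\]

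The first piece is handled by uniform boundedness. Since $f$ is c\`adl\`ag on a compact interval, it is bounded, say $|f|\le K$. Uniform closeness then gives $|f_n\circ\phi_n|\le K+1$ for large $n$, and hence $|f_n|\le K+1$, because $\phi_n$ is a bijection. Using $|a^2-b^2|\le |a-b|(|a|+|b|)$ together with the substitution $x=\phi_n(t)$, the first integrand is bounded pointwise by
\[
(2K+1)\,\sup_t |f_n(\phi_n(t))-f(t)| \to 0,
\]
so the first piece tends to $0$. This step deliberately avoids any change of variables in the measure, since $\phi_n$ need not be absolutely continuous.

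The second piece is the main obstacle: a time change does not preserve Lebesgue measure, so I cannot simply substitute. I will instead use dominated convergence. We have $\phi_n^{-1}(x)\to x$ uniformly, because
\[
\sup_x |\phi_n^{-1}(x)-x| = \sup_t |t-\phi_n(t)|.
\]
A c\`adl\`ag function has at most countably many discontinuities, and at every continuity point $x$ of $f$ this gives $f^2(\phi_n^{-1}(x)) \to f^2(x)$. The exceptional set is countable and therefore Lebesgue-null. The integrands are bounded by $K^2$, so bounded convergence gives the limit $\int_0^1 f^2$, and the second piece tends to $0$.

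Combining the two pieces, $F(f_n)\to F(f)$. Since $J_1$ is a metrizable topology (for instance via the Billingsley metric), sequential continuity is the same as continuity, which completes the argument.
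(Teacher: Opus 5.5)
Your proof is correct, but it takes a different route from the paper's. The paper argues directly with the metric. For any $g$ with $d(f,g)<\varepsilon$ it picks a time change $\lambda$ and sandwiches $g(x)$ between $\inf_{|y-x|\le\varepsilon}f(y)-\varepsilon$ and $\sup_{|y-x|\le\varepsilon}f(y)+\varepsilon$. This bounds $|g^2(x)-f^2(x)|$ pointwise by $(2C+\varepsilon)$ times the local oscillation of $f$ over an $\varepsilon$-window plus $2\varepsilon$. The paper then integrates this bound and uses Riemann (Darboux) integrability of $f$ to show the integrated oscillation vanishes as $\varepsilon\to 0$.

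You instead use the sequential characterization of $J_1$ convergence and split the error into two pieces:
\begin{itemize}
\item a term that is uniformly small, handled by the pointwise relabeling $x=\phi_n(t)$ (you are right to avoid a change of variables in Lebesgue measure, since $\phi_n$ need not be absolutely continuous);
\item a pure time-change term $\int_0^1 f^2\circ\phi_n^{-1}\to\int_0^1 f^2$, handled by bounded convergence and the countability of the jumps of a c\`adl\`ag function.
\end{itemize}

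The paper's version gives an explicit bound that is uniform over the whole $\varepsilon$-ball, so it needs neither the sequential characterization nor an appeal to metrizability. Yours is shorter, relies only on standard Lebesgue tools, and isolates exactly where the time change enters.

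Both arguments rest on the same fact: $f$ is continuous Lebesgue-almost everywhere. The paper encodes this through Darboux sums, while you invoke it directly. In fact, the paper's oscillation integral could equally be sent to zero by bounded convergence.
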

\begin{proof}
    First, we observe that c\`adl\`ag functions on $[0,1]$ are all bounded, implying that $F(f) < \infty$ for all $f \in \cD[0,1]$.

    Next, given an $f \in \cD[0,1]$, assume $\|f(x)\|_\infty \le C$. Consider the Skorokhod $J_1$ metric $d$ defined as
    \begin{equation}
       d(f,g) := \inf_{\lambda \in \mathbb T} \left\{ \|f\circ \lambda - g\|_\infty + \|\lambda - id \|_\infty \right\}
    \end{equation}
    where $\mathbb T$ is the set of all increasing bijections on $[0,1]$. Consider the $\varepsilon$-ball around $f$:
    \begin{equation}
        B(f;\varepsilon) = \{ g \in \cD[0,1] : d(f,g)<\varepsilon \}.
    \end{equation}
    For any $g\in B(f;\varepsilon)$, there must exist a $\lambda\in \mathbb T$ such that
    \begin{equation}
         \|f\circ \lambda - g\|_\infty + \|\lambda - id \|_\infty \le \varepsilon.
    \end{equation}
    Therefore, for any $x \in [0,1]$,
    \begin{gather}
        g(x) \le f\circ\lambda(x) + \varepsilon \le \sup_{|y-x|\le\varepsilon} f(y) + \varepsilon,
        \\
        g(x) \ge f\circ\lambda(x) - \varepsilon \ge \inf_{|y-x|\le\varepsilon} f(y) - \varepsilon,
        \\
        |g(x)| \le C + \varepsilon.
    \end{gather}
    Therefore,
    \begin{align}
        |g^2(x) - f^2(x)| \le |g(x) + f(x)||g(x)-f(x)| \le (2C+\varepsilon)\left( \sup_{|y-x|\le \varepsilon}f(y) - \inf_{|y-x|\le \varepsilon}f(y) + 2\varepsilon\right).
    \end{align}
    Now, consider the two Riemann integrals
    \begin{equation}
        \int_0^1 \left(\sup_{|y-x|\le 1/2N}f(y) \right) dx \quad \text{and} \quad \int_0^1 \left(\inf_{|y-x|\le 1/2N}f(y)\right) dx.
    \end{equation}
    The Riemann sum of the first (resp.\ second) integral above on the partition
    \begin{equation}
        (0, 1/N, \dots, (N-1)/N, 1 ),
    \end{equation}
    which has mesh $1/N$,
    is close to the upper (resp.\ a lower) Darboux sum of $\int_0^1 f(x) dx$ on the partition
    \begin{equation}
        (0, 1/2N, 3/2N, \dots, (2N-1)/2N, 1),
    \end{equation}
    which has mesh $1/N$, and their difference (occurring only near the end points 0 and 1) is at most $2C/N$. Since $f$ is Riemann, hence Darboux, integrable,
    \begin{equation}
       \lim_{N \to \infty} \int_0^1 \left(\sup_{|y-x|\le 1/2N}f(y) - \inf_{|y-x|\le 1/2N}f(y) \right) dx = 0.
    \end{equation}
    It therefore follows that
    \begin{equation}
      \lim_{\varepsilon \to 0}  \sup_{ g\in B(f;\varepsilon) } |F(g) - F(f)| =  \lim_{\varepsilon \to 0} \int_0^1 (2C+\varepsilon)\left( \sup_{|y-x|\le \varepsilon}f(y) - \inf_{|y-x|\le \varepsilon}f(y) + 2\varepsilon\right) dx=0,
    \end{equation}
    concluding that $F$ is continuous at $f$.
\end{proof}

\subsection{Proof of \cref{prop:grapa-bahadur} (Bahadur expansion of GRAPA/$\klinf$ bet fractions)}\label[appendix]{sec:grapa-bahadur}

\begin{proof}
Define $f(\lambda, x) = -\log(1 + \lambda(x - m))$, $Q(\lambda) = \Exp_P{f(\lambda, X)}$, and
\begin{equation}
    Q_n(\lambda) = \frac{1}{n} \sum_{i=1}^n f(\lambda, X_i),
\end{equation}
where we consider the domain $\lambda \in [-1/(1-m), 1/m]$. Let $\lambda^*$ and $\lKL_n$ be the minimizers of $Q$ and $Q_n$ respectively. Then, $\lgr_{n+1} = \lKT_n$. (See \cref{sec:klinf} for the concept of $\lKT_n$ and $\klinf$.)

We now verify that the M-estimation problem above meets all assumptions for Theorem 5 in \cite{niemiro1992asymptotics}, with regularity constants $s=0$ and $r=10$. Note that while the asymptotic results of \cite{niemiro1992asymptotics} are stated for open domains, they also apply to our closed domain $ [-1/(1-m), 1/m]$ as the minimizer is allowed to be defined arbitrarily when $Q_n$ has no minimum on the open domain $(-1/(1-m), 1/m)$.

\newcommand{\E}[1]{\Exp_P\left(#1\right)}
\begin{itemize}
    \item $f(\cdot,x)$ is convex for any $x\in[0,1]$.
    \item For any $\lambda \in (-1/(1-m), 1/m$, the expected value $Q(\lambda) = \E{f(\lambda, X)}$ is finite, since $f(\lambda, \cdot)$ is an upper and lower bounded function.
    \item $\lambda^* = 0$ is the unique minimizer of $Q(\lambda)$. To see this, $Q(0) = 0$ and if $\lambda \neq 0$, because (1)
    $t \mapsto -\log(1+t)$ is strictly convex, (2) $\lambda(X-\mu)$ is not a point mass:
\begin{equation}
     Q(\lambda) = \E{  -\log(1 + \lambda(X - m)) } > -\log(1 + \E{\lambda(X - \mu)}) = 0.
\end{equation}
\end{itemize}
The properties above already ensured $\lKT_n \to \lambda^* $ almost surely. The following additional properties ensure normality and almost sure Bahadur expansion. Let $g(\lambda, x) =f'(\lambda, x) = \frac{-x + m}{1 + \lambda(x-m)}$.
\begin{itemize}
    \item $\E{ g^2(\lambda, X) } < \infty$ for each $\lambda$, since $g(\lambda, \cdot)$ is bounded.
    \item $Q(\lambda)$ is twice differentiable at the minimum $\lambda = 0$, and $Q''(0) > 0$. To see this, for small $\lambda$:
    \begin{equation}
        Q''(\lambda) = \E{ g'(\lambda, X) } = \E{ \frac{(X - m)^2}{ (1 + \lambda(X-m))^2 }  }, \quad Q''(0) = \Var{X} > 0.
    \end{equation}
    \item Expanding $Q'(\lambda) = \E{g(\lambda, X)}$ near $\lambda \approx 0$:
    \begin{align}
        & Q'(\lambda) = \E{ g(\lambda, X)  } = \E{ g(0, X) + \lambda g'(0, X)  + \frac{\lambda^2}{2} g''(\xi_X , X) } =
        \\
        &\lambda \Var X - \frac{\lambda^2}{2} \E{ \frac{2(X - m)^3}{ (1 + \xi_X(X-m))^3 } }
    \end{align}
    where $|\xi_X| \le |\lambda|$ is the Lagrange remainder that depends on $X$. It is easy to see that if $|\lambda| < 1/2$,
    \begin{equation}
         |Q'(\lambda) - \lambda Q''(0)| \le 8 \lambda^2.
    \end{equation}
    \item Consider $\E{ (g(\lambda , X) - g(0 ,X) )^2 }$ near $\lambda \approx 0$. With a similar Lagrange remainder $|\zeta_X| \le |\lambda|$:
    \begin{align}
        &\E{ (g(\lambda , X) - g(0 ,X) )^2 } = \E{ (X-m)^2 \left( 1 - \frac{1}{1 +\lambda(X - m)} \right) ^2} =
        \\
        &\E{ (X-m)^2 \left( \frac{\lambda (X - \mu) }{  (1 + \zeta_X(X - m))^2   }   \right) ^2} .
    \end{align}
    Therefore if $|\lambda| < 1/2$,
    \begin{equation}
         \E{ (g(\lambda , X) - g(0 ,X) )^2 }  \le  16 \lambda^2.
    \end{equation}
    \item Finally, for $|\lambda| < 1/2$,
    \begin{equation}
        \E{ g^{10}(\lambda, X) } = \E{  \frac{|X - m|^{10}}{|1 + \lambda(X-m)|^{10}} } \le 1024.
    \end{equation}
\end{itemize}
From these properties, we have verified that all conditions for Theorem 5 in \cite{niemiro1992asymptotics} hold with $s=0$ and $r=10$. It thus follows that, \emph{almost surely},
\begin{align}
    \sqrt{n}\lKL_n &=  - \frac{1}{Q''(0)} \cdot \frac{\sum_{k=1}^n g(0, X_k)}{\sqrt{n}} + o(n^{-1/4} \log n) \\
    &= \frac{1}{\Var X} \cdot \frac{\sum_{k=1}^n (X_k - m)}{\sqrt{n}}  + o(n^{-1/4} \log n),
\end{align}
which concludes the proof.
\end{proof}

We also remark that the Bahadur expansion of $\lgr_{n+1} = \lKL_n$ above can be combined with \cref{lem:div-sample-mean} to show that $\sum (\lgr_{n})^2 = \infty$ almost surely, thus an alternative proof of the bankruptcy of GRAPA.

\subsection{Proof of \cref{thm:nocash} (no-cash criterion)}\label[appendix]{sec:pf-nocash}

The proof of \cref{thm:nocash} is made convenient by the following lemma.

\begin{lemma}\label[lemma]{lem:exp-regular}
    Let $\gamma$ be a measure on $[0,\infty)$. Then,
    \begin{equation}
        \lim_{n \to \infty} \int_0^\infty \exp( - x A_n ) \gamma(\d x) = \gamma(\{0\})
    \end{equation}
    for any positive increasing sequence $( A_n )$ that increases to $\infty$.
\end{lemma}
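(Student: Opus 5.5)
The plan is to split the integration domain into the atom at zero and the open half-line, and to show that the contribution from $(0,\infty)$ vanishes by dominated convergence. Write
\begin{equation}
    \int_0^\infty \exp(-xA_n)\,\gamma(\d x) = \gamma(\{0\}) + \int_{(0,\infty)} \exp(-xA_n)\,\gamma(\d x),
\end{equation}
using that $\exp(-0\cdot A_n)=1$ for every $n$. It then suffices to show the second integral tends to $0$.

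For each fixed $x>0$, the integrand $\exp(-xA_n)$ is decreasing in $n$ because $(A_n)$ is increasing. It tends to $0$ pointwise because $A_n\to\infty$. It is also bounded by $1$, since $A_n>0$ and $x\ge 0$. The one point to watch is integrability of the dominating function, which requires $\gamma$ to be finite. This is the case in the intended application: $\gamma$ will be a pushforward of the probability measure $\pi$, so it is a probability measure. If $\gamma$ is merely $\sigma$-finite, I would use monotone convergence for decreasing sequences instead. This needs only that one integral be finite, for instance $\int \exp(-xA_1)\,\gamma(\d x)<\infty$. Without that assumption the statement can fail, since both sides may equal $+\infty$ when $\gamma(\{0\})=\infty$. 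So I will state the finiteness assumption explicitly, or note that the claim holds trivially in the infinite case under the appropriate reading.

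With finite $\gamma$, dominated convergence (dominating function $\id$, integrable against $\gamma$) gives
\begin{equation}
    \int_{(0,\infty)} \exp(-xA_n)\,\gamma(\d x)\to 0,
\end{equation}
and the lemma follows. Monotonicity of $(A_n)$ is not needed for this argument; it only makes the monotone-convergence variant available. I do not expect any real obstacle. The only care needed is measure-theoretic bookkeeping: separating the atom at $0$ from the rest, and making sure the dominating function is integrable.
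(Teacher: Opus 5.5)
Your proof is correct and is essentially the paper's argument. The paper applies the decreasing monotone convergence theorem directly on $[0,\infty)$ to $\exp(-xA_n)\downarrow \id_{\{x=0\}}$. Like your dominated convergence step after splitting off the atom, this tacitly requires $\gamma$ to be finite, which holds in the application, where $\gamma$ is $\pi^{+}$ or the reflection of $\pi^{-}$.

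One small slip: $\gamma(\{0\})=\infty$ is not a failure case, since both sides are then $+\infty$. A genuine counterexample to the unrestricted statement is $\gamma(\d x)=x^{-2}\,\d x$ on $(0,1]$: the left side is $+\infty$ for every $n$, while $\gamma(\{0\})=0$.
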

\begin{proof}
    The functions $( \exp(-xA_n) : x \in [0,\infty) )_{n \ge 1}$ are pointwise monotone decreasing, and bounded in $[0,1]$. Therefore, due to the monotone convergence theorem,
    \begin{equation}
                \lim_{n \to \infty} \int_0^\infty \exp( - x A_n ) \gamma(\d x) =   \int_0^\infty \left\{ \lim_{n \to \infty}  \exp( - x A_n ) \right\} \gamma(\d x) =  \int_0^\infty \id_{\{ x=0 \}} \gamma(\d x) =\gamma(\{0\}),
    \end{equation}
    concluding the proof.
\end{proof}

Now we prove \cref{thm:nocash}.

\begin{proof}
First, let us prove the case when $\pi$ does not have an atom at 0, i.e., $\pi(\{0\}) = 0$.
Let $\pi^+$ and $\pi^-$ be the restrictions of $\pi$ on $(0,1/m]$ and $[-1/(1-m),0 )$ respectively. Since $\pi$ does not have an atom at 0,
    \begin{equation}
    W_n^\pi = \underbrace{\int_0^{1/m} \left\{\prod_{k=1}^n (1 + \lambda(X_k - m))  \right\} \pi^+(\d \lambda)}_{W^+_n} +  \underbrace{\int_{-1/(1-m)}^0 \left\{\prod_{k=1}^n (1 + \lambda(X_k - m))  \right\} \pi^-(\d \lambda)}_{W^-_n}.
\end{equation}
$(W^{+}_n)$ and $(W^-_n)$ are both nonnegative martingales (since they are mixtures of nonnegative martingales), and let us show that they both converge to 0 almost surely. By the inequality $1+x \le \exp(x)$:
\begin{equation}
    W_n^+ \le \int_0^{1/m} \exp\left( \lambda \sum_{k=1}^n(X_k - m)  \right) \pi^+(\d \lambda).
\end{equation}
By the law of the iterated logarithm, $\liminf \sum_{k=1}^n (X_k - m) = -\infty$ almost surely, so
there exists a (random) sequence of positive integers $n_1 < n_2 < \dots$ such that $s_N := -\sum_{k=1}^{n_N}(X_k - m) $ is a positive increasing sequence that increases to $\infty$. By \cref{lem:exp-regular}, the sequence $(W_{n_N}^+)_{N \ge 0}$ then converges to 0. Therefore we see that,
\begin{equation}
    \liminf_{n \to \infty}  W_n^+ = 0, \quad\text{almost surely}.
\end{equation}
However, $W_n^+$ is a nonnegative martingale, so it must converge almost surely, therefore
\begin{equation}
    \lim_{n \to \infty}  W_n^+ = 0, \quad\text{almost surely}.
\end{equation}
The same reasoning holds for $W_n^-$, using $\limsup \sum_{k=1}^n (X_k - m) = \infty$ almost surely. This concludes the proof that $W_n^\pi \to 0$ almost surely.

Next, if $\pi$ has an atom at 0, i.e., $\pi(\{0\}) > 0$, we can simply decompose the wealth into its ``cash component'' and its ``bet component''
\begin{equation}
    W_n^\pi = \pi(\{0\}) \cdot W_n^0 + \int W_n^\lambda \, \pi|_{[-\frac{1}{1-m},0)\cup(0,\frac{1}{m}]}(\d \lambda),
\end{equation}
where $W_n^0 = 1$, and the second part converges to 0. Therefore, $W_n^\pi \to \pi(\{0\})$ almost surely.
\end{proof}

\subsection{Discussion and proof for \cref{cor:improve} (improvability of non-bankrupt strategies)}\label[appendix]{sec:pf-improve}

We begin our argument by noting that the cash-removal \eqref{eqn:remove-cash} is actually a mixture of two strategies, $\pi$-mixture and cash ($M_n^{\bf 0} = 1$), with a \emph{negative} weight on cash. That is, one leverages (longs) the original strategy $\pi$-mixture by borrowing (shorting) some cash.
Let us therefore clarify the general scenarios for the mixture of two strategies that may or may not allow shorting.

\begin{fact}[Mixture of two predictable plug-ins, long only]\label[fact]{fct:mixture-of-two} Let $(W_n^{\blamb})$ and $(W_n^{\bnu})$ be the wealth processes corresponding to two predictable bet fraction sequences $\blamb$ and $\bnu$. Then, the portfolio consisting of these two strategies
\begin{equation}
   (1-\kappa) W_n^{\blamb} + \kappa W_n^{\bnu}
\end{equation}
is equivalent to the predictable plug-in strategy $W_n^{\bbeta}$ with bet fraction sequence
\begin{equation}\label{eqn:fraction-mixture-1}
    \beta_n = \frac{(1-\kappa)W_{n-1}^{\blamb} \lambda_n +  \kappa W_{n-1}^{\bnu} \nu_n }{(1-\kappa)W_{n-1}^{\blamb} + \kappa W_{n-1}^{\bnu}},
\end{equation}
where $\kappa \in [0,1]$ is a constant.
\end{fact}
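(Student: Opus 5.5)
The statement is \cref{fct:mixture-of-two}. I would prove it by a one-line telescoping identity, arguing by induction on $n$ that the portfolio value $V_n := (1-\kappa) W_n^{\blamb} + \kappa W_n^{\bnu}$ coincides with $W_n^{\bbeta}$ for every $n$. The base case is immediate: $V_0 = (1-\kappa)+\kappa = 1 = W_0^{\bbeta}$.

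For the inductive step, I expand one round of each strategy using $W_n^{\blamb} = W_{n-1}^{\blamb}(1+\lambda_n(X_n-m))$ and the analogous identity for $\bnu$. This gives
\begin{equation}
V_n = V_{n-1} + \bigl((1-\kappa)W_{n-1}^{\blamb}\lambda_n + \kappa W_{n-1}^{\bnu}\nu_n\bigr)(X_n - m).
\end{equation}
When $V_{n-1}>0$, the bracket equals $V_{n-1}\beta_n$ by the definition \eqref{eqn:fraction-mixture-1}. Hence $V_n = V_{n-1}(1+\beta_n(X_n-m))$, and the induction hypothesis $V_{n-1}=W_{n-1}^{\bbeta}$ closes the step.

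Two side conditions need checking, and I expect them to be the only (minor) obstacles.

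First, $\beta_n$ must be an admissible predictable bet fraction. It is $\sigma(X_1,\dots,X_{n-1})$-measurable, since it is built from $W_{n-1}^{\blamb}$, $W_{n-1}^{\bnu}$, $\lambda_n$ and $\nu_n$, all of which are predictable. Because $\kappa\in[0,1]$ and both wealths are nonnegative, the weights $(1-\kappa)W_{n-1}^{\blamb}/V_{n-1}$ and $\kappa W_{n-1}^{\bnu}/V_{n-1}$ are nonnegative and sum to one. So $\beta_n$ is a convex combination of $\lambda_n$ and $\nu_n$, and therefore lies in the interval $[-\frac{1}{1-m},\frac1m]$. This is exactly where the long-only assumption is used.

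Second, I must handle the degenerate denominator $V_{n-1}=0$. In that case both nonnegative summands vanish, so $V_k=0$ for all $k\ge n-1$. I would define $\beta_n$ arbitrarily there, say $\beta_n=0$. Then $W_k^{\bbeta}=0=V_k$ for all such $k$, since $W_{n-1}^{\bbeta}=V_{n-1}=0$ by induction and the product stays at zero.
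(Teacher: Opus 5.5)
Your proof is correct and follows the paper's reasoning. The paper justifies $\beta_n \in [-\frac{1}{1-m},\frac{1}{m}]$ exactly by observing that it is a convex combination of $\lambda_n$ and $\nu_n$, and it leaves the one-step identity $V_n = V_{n-1}(1+\beta_n(X_n-m))$ implicit; your induction makes that identity explicit, and your handling of the case $V_{n-1}=0$ (where the paper's formula reads $0/0$) fills a small gap the paper leaves open.
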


To see that the bet fraction \eqref{eqn:fraction-mixture-1} $\beta_n \in [-\frac{1}{1-m}, \frac{1}{m}]$, it is a convex combination of $\lambda_n, \nu_n \in [-\frac{1}{1-m}, \frac{1}{m}]$.

\begin{fact}[Mixture of two predictable plug-ins, short allowed]\label[fact]{fct:short} Let $(W_n^{\blamb})$ and $(W_n^{\bnu})$ be the wealth processes corresponding to two predictable bet fraction sequences $\blamb$ and $\bnu$. If these strategies further ensure that, \textbf{on all binary paths $(X_n) \in \{0, 1\}^{\infty}$},
\begin{equation}\label{eqn:bound-on-bin-paths}
    \frac{W_n^{\blamb}}{W_n^{\bnu}} > \rho \in [0,1), \quad \forall n
\end{equation}
then, the portfolio consisting of these two strategies
\begin{equation}
   (1-\kappa) W_n^{\blamb} + \kappa W_n^{\bnu}
\end{equation}
is equivalent to the predictable plug-in strategy $W_n^{\bbeta}$ with bet fraction sequence
\begin{equation}\label{eqn:fraction-mixture-2}
    \beta_n = \frac{(1-\kappa)W_{n-1}^{\blamb} \lambda_n +  \kappa W_{n-1}^{\bnu} \nu_n }{(1-\kappa)W_{n-1}^{\blamb} + \kappa W_{n-1}^{\bnu}},
\end{equation}
where we can now \textbf{long $\blamb$ and short $\bnu$}:
\begin{equation}
    \kappa \in \left[ -\frac{\rho}{1 - \rho}, 1 \right].
\end{equation}
In particular, when $\bnu = \bf 0$, it means that starting with any strategy $\blamb$ with always-valid strict wealth lower bound $W_n^{\blamb} > \rho$ on all binary paths, one can borrow $b = -\kappa \in (0, \rho/(1-\rho)]$ amount of cash and execute the leveraged bets
\begin{equation}
    \beta_n =  \frac{(1+b)W_{n-1}^{\blamb} \lambda_n  }{(1+b)W_{n-1}^{\blamb} - b}\quad \implies \quad W_n^{\bbeta} = (1+b)W_n^{\blamb} - b
\end{equation}
instead. 
\end{fact}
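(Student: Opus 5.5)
The plan is to verify three things in order: the algebraic identity behind the portfolio, positivity of the portfolio wealth, and admissibility of $\beta_n$, i.e.\ $\beta_n \in [-\frac{1}{1-m}, \frac{1}{m}]$. Write $V_n := (1-\kappa) W_n^{\blamb} + \kappa W_n^{\bnu}$, with $V_0 = 1$.

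\textbf{Step 1 (identity).} Expand one step of each factor:
\begin{equation}
V_n = V_{n-1} + \bigl((1-\kappa)W_{n-1}^{\blamb}\lambda_n + \kappa W_{n-1}^{\bnu}\nu_n\bigr)(X_n - m).
\end{equation}
Whenever $V_{n-1} > 0$, this equals $V_{n-1}\bigl(1 + \beta_n (X_n - m)\bigr)$ with $\beta_n$ as in \eqref{eqn:fraction-mixture-2}, and $\beta_n$ is predictable. Induction on $n$ then gives $V_n = W_n^{\bbeta}$, provided every $V_{n-1}$ is strictly positive. So the whole Fact reduces to positivity of the portfolio together with the range of $\beta_n$.

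\textbf{Step 2 (positivity).} For $\kappa \in [0,1]$ both weights are nonnegative, and we are in the long-only situation of \cref{fct:mixture-of-two}. For $\kappa \in [-\frac{\rho}{1-\rho}, 0)$, hypothesis \eqref{eqn:bound-on-bin-paths} (which also forces $W_n^{\bnu} > 0$) gives
\begin{equation}
V_n = W_n^{\bnu}\Bigl((1-\kappa)\tfrac{W_n^{\blamb}}{W_n^{\bnu}} + \kappa\Bigr) > W_n^{\bnu}\bigl((1-\kappa)\rho + \kappa\bigr) = W_n^{\bnu}\bigl(\rho + \kappa(1-\rho)\bigr) \ge 0.
\end{equation}
The last inequality is exactly where the lower endpoint $\kappa \ge -\rho/(1-\rho)$ is used.

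\textbf{Step 3 (admissibility of $\beta_n$).} Fix the past and regard $V_n$ as a function of $x = X_n$. It is affine in $x$, and by Step 1 equals $V_{n-1}(1 + \beta_n(x - m))$. Positivity at the two endpoints then gives the range:
\begin{itemize}
\item $V_n > 0$ at $x = 0$ gives $1 - \beta_n m > 0$, so $\beta_n < 1/m$;
\item $V_n > 0$ at $x = 1$ gives $1 + \beta_n(1-m) > 0$, so $\beta_n > -1/(1-m)$.
\end{itemize}
Hence $\beta_n$ lies in the admissible range. Affinity also gives $V_n > 0$ for every $x \in [0,1]$, which lets the induction of Step 1 continue. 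The special case $\bnu = \mathbf 0$, $b = -\kappa$ is then direct substitution, with $W^{\bnu} \equiv 1$, into the identity of Step 1.

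\textbf{Main obstacle.} The hypothesis is stated only on binary paths, but the observations $X_1, \dots, X_{n-1}$ preceding step $n$ need not be binary. The bet fractions $\lambda_k, \nu_k$ are arbitrary predictable functions of the past, so $V_{n-1}$ is not multi-affine in the past, and binary-path control does not automatically propagate to general prefixes. I would handle this by applying the endpoint argument one step at a time: the quantity that must be controlled is the conditional minimum over $X_n \in \{0,1\}$ with the actual past held fixed. That is, I would read \eqref{eqn:bound-on-bin-paths} as ``for every realized past, and for either binary value of the current observation'', which is the predictable form used later via $\widehat W_n$ in \cref{cor:improve}. I would state this interpretation explicitly. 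If the literal all-binary-paths reading were insisted on, a counterexample with a non-binary prefix would likely be needed to show it is insufficient in general.
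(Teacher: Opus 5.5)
Your core argument is the paper's: it writes $1+\beta_n(X_n-m)=V_n/V_{n-1}$ and reads off $-\frac{1}{1-m}<\beta_n<\frac{1}{m}$ from $V_n>0$ at $X_n\in\{0,1\}$. Your Step~2 simply makes explicit the bound $(1-\kappa)\rho+\kappa\ge 0$, which the paper leaves implicit.

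The obstacle you flag is genuine. The paper's justification (``for all binary paths, it holds in particular when $X_n=0$ and $1$'') does not address it, because it only certifies $\beta_n$ after binary prefixes. Under the literal hypothesis the Fact is false, and here is the counterexample you anticipated:
\begin{itemize}
\item Take $m\in(0,1)$, $\bnu=\mathbf{0}$, $\lambda_n=0$ for $n\neq 2$, and $\lambda_2=1/m$ if $X_1\in(0,1)$, $\lambda_2=0$ otherwise.
\item On every binary path $W_n^{\blamb}\equiv 1$. So the hypothesis holds for every $\rho\in[0,1)$, and hence every $b>0$ is allowed.
\item Whenever $X_1\in(0,1)$, however, $\beta_2=(1+b)/m>1/m$. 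If also $X_2=0$, then $(1+b)W_2^{\blamb}-b=-b<0$.
\end{itemize}
So your reinterpretation is a necessary repair, not a choice of reading. Your version says the bound holds for every realized past and each binary value of the current observation; for $\bnu=\mathbf{0}$ this is the predictable condition $\widehat W_n^{\blamb}>\rho$. This is also the form the paper actually relies on in \cref{cor:improve}, through the step-by-step events $E_n=\{\widehat W_n^{\blamb}>\rho\}$.

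The literal binary-path hypothesis does suffice when $W_n^{\blamb}$ and $W_n^{\bnu}$ are multi-affine in $(X_1,\dots,X_n)$, as for fixed-fraction or mixture strategies in the cash-removal motivation. Then $V_n$ is multi-affine, so its minimum over $[0,1]^n$ is attained at a vertex. Positivity on binary paths therefore propagates to all paths, and your Step~3 applies verbatim.
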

To see that the bet fraction $\beta_n$ defined in \eqref{eqn:fraction-mixture-2} is in $[-\frac{1}{1-m}, \frac{1}{m}]$ even when $\kappa < 0$,
$$1+\beta_n(X_n - m) = \frac{  (1-\kappa) W_n^{\blamb} + \kappa W_n^{\bnu}}{  (1-\kappa) W_{n-1}^{\blamb} + \kappa W_{n-1}^{\bnu}} > 0$$
for all binary paths, it holds in particular when $X_n = 0$ and $1$. This is why we require the bound \eqref{eqn:bound-on-bin-paths} to hold on binary paths when formulating shorting here: if we merely know by oracle that $ \frac{W_n^{\blamb}}{W_n^{\bnu}} > \rho $ without knowing $X_n$ can take $0$ and $1$, the bet fraction equivalent to shorting \eqref{eqn:fraction-mixture-2} might correspond to an infeasible fraction that just ``happens not to'' lead to negative wealth. The assumption of \eqref{eqn:bound-on-bin-paths} on all binary paths is weaker than \eqref{eqn:bound-on-bin-paths} on all bounded paths $(X_n)\in [0,1]^\infty$, and in general incomparable to \eqref{eqn:bound-on-bin-paths} on $P$-almost all paths under some distribution $P$.

We also see here that for a mixture strategy with cash component $\pi(\{0\}) > 0$, removing the cash component is an instance of the above with $\rho = \pi(\{0\})$ and $b =  \rho/(1-\rho)$. However, in general, a non-bankrupt strategy's minimum wealth is a path-dependent quantity that is not known in advance. We thus employ the following idea: we fix \emph{some} $\rho > 0$, and execute the bet fraction equivalent to borrowing $b=\rho/(1-\rho)$
\begin{equation}
    \beta_n^{\rho} = \frac{ (1+\frac{\rho}{1-\rho}) W_{n-1}^{\blamb} \lambda_n }{(1+\frac{\rho}{1-\rho}) W_{n-1}^{\blamb} - \frac{\rho}{1-\rho}} = \frac{W_{n-1}^{\blamb} \lambda_n}{W_{n-1}^{\blamb} - \rho}
\end{equation}
when it is valid (i.e.\ $\beta_n^\rho \in [-\frac{1}{1-m}, \frac{1}{m}]$). Analogous to the discussion around the validity of \eqref{eqn:fraction-mixture-2} above, we have the following statement characterizing when the validity $\beta_n^\rho \in [-\frac{1}{1-m}, \frac{1}{m}]$ holds. We recall from \cref{sec:all} that
\begin{equation}
    \widehat W_n^{\blamb} = \min_{x \in [0,1] } W_{n-1}^{\blamb}(1 + \lambda_n(x - m)) = \min\{ W_{n-1}^{\blamb}(1 + \lambda_n(0 - m)),  W_{n-1}^{\blamb}(1 + \lambda_n(1 - m))  \}.
\end{equation}
\begin{lemma}
    $\{ \widehat W_n^{\blamb} > \rho \} \subseteq \{ -\frac{1}{1-m} < \beta_n^\rho < \frac{1}{m} \}$.
\end{lemma}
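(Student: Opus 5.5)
Write $W := W_{n-1}^{\blamb}$ and $\lambda := \lambda_n$; we work on the event $\{\widehat W_n^{\blamb} > \rho\}$. The idea is to express $1+\beta_n^\rho(x-m)$ directly through the next-round wealth, and then read the two endpoint constraints off the cases $x=0$ and $x=1$.

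First, $\widehat W_n^{\blamb} > \rho$ forces $W > \rho > 0$. Indeed, $\widehat W_n^{\blamb} = \min_{x\in\{0,1\}} W(1+\lambda(x-m))$. The two factors $1+\lambda(0-m)$ and $1+\lambda(1-m)$ have a convex combination with weights $1-m$ and $m$ equal to $1$. So at least one factor is at most $1$, which gives $\widehat W_n^{\blamb} \le W$ and hence $W > \rho$. In particular the denominator $W-\rho$ of $\beta_n^\rho = W\lambda/(W-\rho)$ is strictly positive, and $\beta_n^\rho$ is well defined.

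Next comes the key identity. For any $x\in[0,1]$,
\begin{equation*}
1+\beta_n^\rho(x-m) = \frac{W-\rho + W\lambda(x-m)}{W-\rho} = \frac{W(1+\lambda(x-m)) - \rho}{W-\rho}.
\end{equation*}
The numerator is the candidate next wealth minus $\rho$. On our event it is strictly positive at both $x=0$ and $x=1$.

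Now translate these two positivity statements into the bounds on $\beta_n^\rho$:
\begin{itemize}
\item At $x=0$ we get $1-\beta_n^\rho m > 0$, i.e.\ $\beta_n^\rho < 1/m$.
\item At $x=1$ we get $1+\beta_n^\rho(1-m) > 0$, i.e.\ $\beta_n^\rho > -1/(1-m)$.
\end{itemize}
If $m\in\{0,1\}$, the relevant bound is read as vacuous ($1/m=\infty$ or $1/(1-m)=\infty$), and the corresponding inequality is trivial.

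There is no real obstacle. The only point needing care is the first step: establishing $W_{n-1}^{\blamb} > \rho$, so that dividing by $W-\rho$ preserves the sign of the inequalities. This follows from the convex-combination observation above.
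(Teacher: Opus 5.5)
Your proof is correct and follows essentially the same route as the paper. Both rewrite $1+\beta_n^\rho(x-m)$ as $\bigl(W_{n-1}^{\blamb}(1+\lambda_n(x-m))-\rho\bigr)/\bigl(W_{n-1}^{\blamb}-\rho\bigr)$ and read off the two bounds at $x=0$ and $x=1$; your convex-combination argument for $W_{n-1}^{\blamb}>\rho$ is the paper's observation that the minimum over $[0,1]$ is at most the value at $x=m$, which the paper uses without comment to keep the denominator positive.
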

\begin{proof} On the event $\{ \widehat W_n^{\blamb} > \rho \}$, 
    we have $W_{n-1}^{\blamb}(1+ \lambda_n(x-m) ) > \rho$ for $x=0$, $m$, and $1$. So
    \begin{equation}
        1 + \beta_n^\rho(x - m) = \frac{\frac{1}{1-\rho}W_{n-1}^{\blamb}(1+ \lambda_n(x-m) ) - \frac{\rho}{1-\rho}  }{\frac{1}{1-\rho}W_{n-1}^{\blamb} - \frac{\rho}{1-\rho}} > 0, \quad x \in \{0, 1\}.
    \end{equation}
    Therefore, $\beta_n^\rho < \frac{1}{m}$ by $x = 0$, and $\beta_n^\rho > -\frac{1}{1-m}$ by $x=1$.
\end{proof}


That is, one can bet $\beta_n^\rho $ when $\{ \widehat W_n^{\blamb} > \rho \}$ happens. This leads us to the following definition of a new strategy that ``opportunistically'' executes the $\rho$-leverage.

\begin{definition}[Opportunistic leveraging] Let $\blamb = (\lambda_n)$ be a predictable plugin betting strategy and $\rho > 0$. Define $E_n = \{ \widehat W_n^{\blamb} > \rho \}$. Then the events $(E_n)$ are predictable. Define the new plugin fraction
\begin{equation}
    \gamma_n = \id_{ E_n } \frac{W_{n-1}^{\blamb} \lambda_n}{W_{n-1}^{\blamb} - \rho} + (1-\id_{ E_n }) \lambda_n,
\end{equation}
which is predictable and always belongs to $[-\frac{1}{1-m}, \frac{1}{m}]$. Therefore $\bgam = (\gamma_n)$ is also a predictable plugin betting strategy.
Further, on $\cap_{k=1}^n E_k$, $W_n^{\bgam} = \frac{W_n^{\blamb} - \rho}{1 - \rho}$. 
We call this new strategy $\bgam$ the \emph{$\rho$-opportunistic leverage} of the strategy $\blamb$.
\end{definition}

In words, the transformation above defines a new strategy that, at each step, first checks if it is safe to execute a bet fraction corresponding to borrowing $b = \rho/(1-\rho)$ by seeing if the precondition $E_n = \{ \widehat W_n^{\blamb} > \rho \}$ holds, then does so if it is and executes the original bet fraction if otherwise. Thus, if next-step minimum wealth $\widehat W_n^{\blamb} $ is indeed always above $\rho$, the new strategy's wealth process is indistinguishable from borrowing $b = \rho/(1-\rho)$ and leveraging the original strategy from the outset. This transformation leads to the proof of \cref{cor:improve}.

\begin{proof}
   Let $(W_n^\sharp)$ be the wealth process of the {$\rho$-opportunistic leverage} of the original strategy. On the event $N^\rho = \bigcap_{n=1}^\infty E_n$, $W_n^\sharp = \frac{W_n - \rho}{1-\rho}$ for all $n$, thus $W_n^\sharp < W_n$ if $W_n < 1$, $W_n^\sharp > W_n$ if $W_n > 1$, and $W_n^\sharp / W_n \to 1/(1-\rho)$ if $W_n \to \infty$.
\end{proof}

\begin{remark}
    The construction above uses the concept of borrowing. Recently, \cite{wang2024borrow} discuss at length the definition, effect, and utility of borrowing in testing-by-betting. Our definition in \cref{fct:short} corresponds to the ``net wealth'' (i.e.\ always paying back the debt after borrowing) discussed by \cite{wang2024borrow}, and differs from it in that we do not allow negative wealth.
\end{remark}

\subsection{Proof of \cref{thm:chisq-klinf} ($\chi^2_{(1)}$ limit of $\klinf$)}\label[appendix]{sec:pf-chisq-klinf}

\begin{proof}
    Due to the Bahadur expansion of the GRAPA bet fractions in \cref{prop:grapa-bahadur},
    \begin{equation}
       \lKL_{n} = \frac{1}{\sigma^2} \cdot \frac{\sum_{k=1}^n (X_k - m)}{n}  + r_n 
    \end{equation}
    where $r_n = o_{a.s.}(n^{-3/4} \log n)$. Expanding $\log(1+x)$, we have
    \begin{align}
          L^*_n =&  \sum_{k=1}^n \log(1 + \lKL_n (X_k - m) ) = \sum_{k=1}^n \lKL_n (X_k - m)  -  \sum_{k=1}^n \frac{(\lKL_n)^2 (X_k - m)^2}{2(1+\xi_{nk})^2}, \\
           =& \frac{S_n^2}{n\sigma^2} + r_n S_n - \frac{( - S_n + n\sigma^2 r_n )^2}{2n \sigma^2}  \sum_{k=1}^n \frac{(X_k - m)^2}{n\sigma^2(1+\xi_{nk})^2} = \frac{S_n^2}{2n\sigma^2} + o_{a.s.}(n^{-1/4}\log^2 n).
    \end{align}
    where $|\xi_{nk}| \le |\lKL_n| = o_{a.s.}(\sqrt{n^{-1} \log n})$ is the Lagrange remainder, and $S_n = \sum_{k=1}^n (X_k - m) = o_{a.s.}(\sqrt{n\log n})$. Noting that $\frac{S_n^2}{n\sigma^2}$ converges weakly to $\chi^2_{(1)}$, we see that so does $2L_n^*$ via Slutsky's theorem.
\end{proof}

\subsection{Proofs of the subGaussian criteria, \cref{thm:sos-crit-subg,thm:nocash-subg}}\label[appendix]{sec:pf-subg}

The proof of \cref{thm:sos-crit-subg} is below.
\begin{proof}
    Without loss of generality, let $m=0$. Consider the log-process
    \begin{equation}\label{eqn:log-subg}
        \log M_n^{\blamb} = \sum_{k=1}^n \lambda_k X_k - \frac{1}{2} \sum_{k=1}^n \lambda_k^2.
    \end{equation}
    On the event $\{ \sum \lambda_n^2 < \infty \}$, the martingale $\sum_{k=1}^n \lambda_k X_k$ has convergent quadratic variation, therefore it a.s.\ converges due to \citet[Theorem 2.15]{hall2014martingale}, so $\log  M_n^{\blamb}$ converges a.s.\ on this event. Therefore, $M_n^{\blamb}$ converges a.s.\ to a non-zero random variable on this event.

    On the event $\{ \sum \lambda_n^2 = \infty \}$, we invoke the martingale strong law of large numbers \citep[Theorem 3]{Fitzsimmonslecturenotes} and see that $\frac{\sum_{k=1}^n \lambda_k X_k}{\sum_{k=1}^n \lambda_k^2} \to 0$ almost surely. Therefore, on this event,
    \begin{equation}
          \log M_n^{\blamb} = \left(\frac{\sum_{k=1}^n \lambda_k X_k}{\sum_{k=1}^n \lambda_k^2} - \frac{1}{2}\right)\cdot\left({\sum_{k=1}^n \lambda_k^2}\right) \to (0-0.5)\cdot(+\infty) = -\infty
    \end{equation}
    almost surely. So $M_n^{\blamb} \to 0$ a.s.\ on this event.
\end{proof}
The proof of \cref{thm:nocash-subg} is below.
\begin{proof}
    It suffices to consider $m = 0$ and $\pi(\{ 0 \}) = 0$. Similar to the proof of \cref{thm:nocash}, we show that the nonnegative martingales
    \begin{equation}
        M_n^+ = \int_0^\infty\exp\left(\sum_{k=1}^n \frac{X_k^2 - (X_k - \lambda)^2}{2} \right) \pi^+(\d \lambda), \;  M_n^- = \int_{-\infty}^0\exp\left(\sum_{k=1}^n \frac{X_k^2 - (X_k - \lambda)^2}{2} \right) \pi^-(\d \lambda)
    \end{equation}
    both converge a.s.\ to 0, where $\pi^+$ and $\pi^-$ are the $(0,\infty)$ and $(-\infty,0)$ parts of $\pi$ respectively. Note that
    \begin{equation}
        M_n^+ = \int_0^\infty\exp\left(\sum_{k=1}^n \frac{2 \lambda X_k - \lambda^2}{2} \right) \pi^+(\d \lambda) \le \int_0^\infty\exp\left(\lambda \sum_{k=1}^n X_k \right) \pi^+(\d \lambda).
    \end{equation}
    Using the same technique as in the proof of \cref{thm:nocash}, $\liminf \sum_{k=1}^n X_k = -\infty$ a.s.\ and \cref{lem:exp-regular} imply that $\lim M_n^+ = \liminf M_n^+ = 0$ a.s.. The same works for $M_n^-$.
\end{proof}

\section{Further discussions on good strategies' necessary bankruptcy} \label[appendix]{sec:further-bankruptcy}

In this appendix, we further discuss the topic in \cref{sec:all}.
We first present additional examples exploring the question ``do all good strategies go bankrupt''.
Below, a slightly modified predictable plug-in strategy is no longer null-bankrupt,  still remains universally power, but is always only \emph{sub}exponentially powerful.

\begin{example}[Increasingly intermittent bets]\label[example]{ex:intermittent} Let $\alpha > 1$ and consider the set $I_\alpha = \{ \lfloor n^\alpha \rfloor : n = 1,2,\dots \}$.
    Let $\lambda_n' = \id_{ n \in I_{\alpha} } \lambda_n$, where $\lambda_n$ is either the KT bet fraction $\lKT_n$ with $C=2$, the GRAPA bet fraction $\lgr_n$, or the aGRAPA bet fraction $\lagr_n$. Then, the wealth process
    \begin{equation}
        W_n' = \prod_{k=1}^n (1 + \lambda_k'(X_k -m) = \prod_{k\in I_{\alpha}, k\le n } (1 + \lambda_k'(X_k-m))
    \end{equation}
    grows sub-exponentially fast $\log W_n' =  \Theta_{a.s.}(n^{1/\alpha})$ under any alternative distribution, and converges a.s.\ to a positive random variable under any null distribution.
\end{example}

That is, one only bets at times $n=1,4,9,16\dots$ if, for example, $\alpha = 2$. The almost sure non-bankruptcy of these strategies is an easy corollary of \cref{thm:sos-crit}. This strategy is improvable by ``betting more frequently'': the standard KT/GRAPA/aGRAPA strategy that does not withhold from betting at $n \notin I_\alpha$. The improvement makes more money under the alternative, at the price of making less money (and possible bankruptcy) under the null.

Given the subexponentially growing wealth in \cref{ex:intermittent},
it is tempting to think that exponential wealth growth $ \log W_n =  \Theta_{a.s.}(n)$ under any alternative implies null-bankruptcy. The following strategy, however, is a simple counterexample to this conjecture.

\begin{example}[Cash-GRAPA mixture] \label[example]{ex:cashgrapa} By the argument from \cref{fct:mixture-of-two}, there exists a betting strategy constituting of a 50-50 mixture between cash and GRAPA:
\begin{equation}
    W_n= \frac{1}{2} + \frac{1}{2}\prod_{k=1}^n (1 + \lgr_k(X_k - m)).
\end{equation}
Then, $W_n$ converges a.s.\ to $1/2$ under any non-degenerate null, but also grows exponentially fast under any alternative distribution, attaining the same rate $n^{-1}\log W_n = \lkelly$ as the cashless GRAPA.
\end{example}

This strategy can be improved by, again, ``removing the cash''. We note that
our ``generalized cash-removal'' idea elaborated in \eqref{sec:pf-improve} can make already exponentially powerful strategies like \cref{ex:cashgrapa} make multiplicatively more money, but cannot make subexponentially powerful strategies like \cref{ex:intermittent} make exponentially money. Generalizing the ``betting more frequently'' improvement we mentioned above for \cref{ex:intermittent} to more subexponentially powerful strategies is of interest for future work.

Next, we discuss the relevance (or lack thereof) of the Cram\'er-Rao lower bounds.
One might notice that for the null-bankrupt strategies
KT, GRAPA, and aGRAPA, the bet fractions are estimators that converge a.s.\ to a fixed number $\lKT_n \to \frac{\mu(P) - m}{C}$, $\lgr_n \to \lkelly$, and $\lagr_n \to \frac{\mu(P) - m}{\sigma^2(P) + (\mu(P)-m)^2}$ that is some functional $f(P)$ of the distribution $P$ such that $f(P) = 0$ for null $P$, and $\Exp_P( \log (1+f(P)\cdot (X-m)) ) > 0 $ for alternative $P$. For readers familiar with statistical lower bounds, it is also tempting to relate sum-of-squares criterion \cref{thm:sos-crit} or the $n^{-1/2}$ criterion \cref{cor:sq-crit} to the Cram\'er-Rao bounds. Indeed, they all concern the optimal $n^{-1/2}$ rate of regular statistical estimation problems. If a strategy's bet fractions $(\lambda_n)$ estimate one such functional $f(P)$ subject to some Cram\'er-Rao bounds, can we show the null bankruptcy of the strategy since $|\lambda_n - f(P)|$, which is $|\lambda_n |$ under the null, is at least $\approx n^{-1/2}$?

A closer look at the Cram\'er-Rao bounds suggests otherwise.  With some additional regularity assumptions on $f(P)$ and $\lambda_n$, the Cram\'er-Rao bounds lower bound the \emph{mean-square errors} of these estimators:
\begin{equation}
    \Exp_P(  (\lambda_n - f(P))^2 )  = \Omega(n^{-1}).
\end{equation}
However, it is easy to see that these variance bounds would imply neither $\lambda_n - f(P) = \Omega_{p}(n^{-1/2})$ nor $ \sum_{n=1}^\infty (\lambda_n - f(P))^2 = \infty$; a sequence of random variables may have infinite expected values while vanishing to 0 at an arbitrarily fast almost sure rate.

Finally, regarding \cref{cor:improve}, it is tempting to consider the following alternative to both the wealth lower bound condition $\bigcap_{n=1}^\infty\{ W_n \ge \rho \}$ and the predictable non-bankruptcy condition $\bigcap_{n=1}^\infty \{ \widehat W_n > \rho \}$: since these lower bounds serve to allow the leveraging (long/short mixture) argument in \cref{sec:pf-improve}, \emph{can we make $\rho$ vary predictably?} Alternatively speaking, can we work on the condition $\bigcap_{n=1}^\infty\{ W_n \ge \rho_n \}$ for a \emph{predictable sequence of wealth lower bounds $(\rho_n)$} and construct the leveraged improvement strategy mimicking what we do in \cref{sec:pf-improve}? The answer, we believe, is generally negative, and sits astride the following two areas: 
\begin{itemize}
    \item In sequential, nonnegative martingale-based inference, if $M_n(\theta)$ is a nonnegative supermartingale for every $\theta$, then so is $\int M_n(\theta) \pi(\d \theta)$ for any mixture distribution $\pi$. What about $\int M_n(\theta) \pi_n(\d \theta)$ for a \emph{predictable} sequence of mixtures $(\pi_n)$? This now requires the sequence to have \emph{consistent marginals} (see e.g.\ Section 5.1 and Lemma A.2 in \cite{flynn2023improved}, and Section 3.1 in \cite{akhavan2025bernstein}). If $\theta$ can only take two values, it is easy to see that $(\pi_n)$ can only be a constant. 
    \item In mathematical finance, one frequently encounters the notion of \emph{self-financing portfolios} (see e.g.\ Definition 5.4 in \cite{FollmerSchied+2025}). In our scenario, if one formally considers a predictable sequence of lower bounds $(\rho_n)$ on $(W_n)$ and invests in the ``$(\rho_n)$-leveraged portfolio'' with wealth process $\frac{W_n-\rho_n}{1-\rho_n}$, then one needs external capital to manage the imbalanced buying and selling of cash and $(W_n)$.
    Namely, it is not a self-financing portfolio and the wealth $\frac{W_n-\rho_n}{1-\rho_n}$ is not a martingale.
\end{itemize}

\end{document}